\newtheorem{theorem}{Theorem}
\newtheorem{lemma}[theorem]{Lemma}
\newtheorem{corollary}[theorem]{Corollary}
\newtheorem{proposition}[theorem]{Proposition}
\newtheorem*{question}{Questions}
\theoremstyle{definition}
\newtheorem{definition}[theorem]{Definition}
\theoremstyle{remark}
\newtheorem{remark}[theorem]{Remark}
\numberwithin{equation}{section}
\newcommand{\HZ}{\mathcal{HZ}}
\newcommand{\ku}{{\Bbbk}} 
\newcommand{\pf}{\begin{proof}}
\newcommand{\epf}{\end{proof}}
\begin{document}

\title[On the bosonization of  a Lie superalgebra]{On the bosonization of the enveloping algebra of a finite dimensional Lie superalgebra}


\author{Nicolás Andruskiewitsch}
\address{Facultad de Matem\'atica, Astronom\'ia y F\'isica,
Universidad Nacional de C\'ordoba. CIEM -- CONICET. 
Medina Allende s/n (5000) Ciudad Universitaria, C\'ordoba, Argentina}
\curraddr{}
\email{nicolas.andruskiewitsch@unc.edu.ar}
\thanks{The work of N.A. was  partially supported by CONICET (PIP 11220200102916CO),FONCyT-ANPCyT (PICT-2019-03660), by the Secyt (UNC)
and by the International Center of Mathematics, Southern University
of Science and Technology, Shenzhen.}

\author{Ken A. Brown}
\address{School of Mathematics and Statistics\\
University of Glasgow\\ Glasgow G12 8QW\\
Scotland}
\email{ken.brown@glasgow.ac.uk}

\thanks{The work of K.B. was supported by Leverhulme Emeritus Fellowship EM 2017-081.}

\subjclass[2020]{Primary 16R20, Secondary 16T20, 17B35} 

\date{}

\begin{abstract}
We exhibit a PI Hopf algebra that is not a finite module over its center.
We survey some ring-theoretical properties of the bosonizations of 
enveloping algebras of Lie superalgebras.  
\end{abstract}

\maketitle



\section{Introduction}
It is well-known that a ring which is a finite module over its center satisfies a polynomial identity
(it is PI, for short), cf. \S\,\ref{PI}. But the converse is false, even for  affine prime Noetherian rings, see \S \ref{positive}.
Nevertheless, one might hope that the converse is true when one restricts attention to algebras which are not ``too pathological'' - for example a prime Noetherian PI-algebra which is a maximal order is always finite over its center, see \cite[Propositions 13.9.8, 13.9.11]{Mcconell-Robson}. 

\medbreak
There are various sorts of quantum groups that are PI because they are finite over their centers. Thus it is natural to consider the following questions, see \cites{Brown-survey-PI,Goodearl-survey}:

\begin{question} Let $H$ be a PI Hopf algebra. Under which conditions is $H$ 
a finite module either    

\begin{enumerate}[leftmargin=*,label=\rm{(\alph*)}]
\item\label{item:question-center}  over its center $\mathcal{Z}(H)$, or equivalently over a central subalgebra? 

\item\label{item:question-hopf-center} Or over its Hopf center $\HZ(H)$, 
or equivalently over a central Hopf subalgebra? 
\item\label{item:question-hopf-normal} Or at least over a normal commutative one?
\end{enumerate} 
\end{question}
Here the Hopf center is the largest central Hopf subalgebra. 
It was shown in \cite{Gelaki-Letzter}
 that  being semiprime Noetherian is not enough to answer
 question \ref{item:question-hopf-normal} positively:

\begin{theorem} \cite{Gelaki-Letzter}
The bosonization $H(\mathfrak{gl}(1,1))$ of the enveloping algebra of the Lie superalgebra
$\mathfrak{gl}(1,1)$ is a semiprime Noetherian PI Hopf algebra,
 but is not a finite module over any normal commutative Hopf subalgebra.  
\end{theorem}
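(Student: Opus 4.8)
The plan is to dispatch the ``easy'' assertions quickly and then spend the real effort on classifying the normal commutative Hopf subalgebras of $H:=H(\mathfrak{gl}(1,1))$, after which the non\-finiteness is immediate. I work over a field $\ku$ of characteristic zero and write $\mathfrak{g}=\mathfrak{gl}(1,1)$, with even part $\mathfrak{g}_{\bar 0}=\ku h\oplus\ku z$ (take $z=E_{11}+E_{22}$ central, and normalize $h$ so that $[h,x]=x$, $[h,y]=-y$ on the odd part), odd part $\mathfrak{g}_{\bar 1}=\ku x\oplus\ku y$ with $x^{2}=y^{2}=0$ and $xy+yx=z$, and $H=U(\mathfrak{g})\#\ku\langle g\rangle$ with $g^{2}=1$ acting by the parity automorphism; thus $g$ is grouplike, $h$ and $z$ are primitive, $x$ and $y$ are $(g,1)$\-skew\-primitive, $z$ is central, and $H$ is pointed with $G(H)=\{1,g\}$ and $P(H)=\ku h\oplus\ku z$. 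For the easy part: by the PBW theorem $H$ is free of rank $8$ over the commutative subalgebra $\ku[h,z]=U(\mathfrak{g}_{\bar 0})$, and a ring that is finitely generated as a module over a commutative subring is PI, so $H$ is PI; $\mathrm{gr}\,U(\mathfrak{g})\cong S(\mathfrak{g}_{\bar 0})\otimes\Lambda(\mathfrak{g}_{\bar 1})$ is commutative Noetherian, so $U(\mathfrak{g})$ and hence the finite crossed product $H$ are Noetherian; and $U(\mathfrak{g})$ is semiprime --- indeed inverting the central element $z$ turns it into $M_{2}$ over the commutative domain $\ku[z^{\pm 1},h']$ (where $h'=h-e_{11}$ centralizes the matrix units built from $x,z^{-1}y$), so it is even prime --- whence $H=U(\mathfrak{g})\#\ku(\mathbb{Z}/2)$ is semiprime because $2$ is invertible, by the standard result on crossed products of a semiprime ring by a finite group of invertible order.

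The substantive step is to show that the only normal commutative Hopf subalgebras of $H$ are $\ku$ and $\ku[z]$. Let $K$ be such a subalgebra. Since $H$ is pointed, so is $K$, and $G(K)\subseteq\{1,g\}$. First I show $g\notin K$: if $g\in K$ then normality forces $\mathrm{ad}_{x}(g)\in K$, and the computation with $\Delta(x)=x\otimes 1+g\otimes x$ and $gx=-xg$ gives $\mathrm{ad}_{x}(g)=2xg$, so $x\in K$; but then $g,x\in K$ with $gx=-xg\neq xg$, contradicting commutativity of $K$. Hence $K$ is connected, so $K\cap H_{1}=\ku\oplus P(K)$ with $P(K)\subseteq P(H)=\ku h\oplus\ku z$, and $K$ meets $\ku x\oplus\ku y$ trivially (a nonzero element there lies in $H_{1}$ but is not primitive). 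Applying normality to a primitive $v=\alpha h+\beta z\in P(K)$ gives $\mathrm{ad}_{x}(v)=[x,v]=-\alpha x\in K\cap\ku x=0$, so $\alpha=0$ and $P(K)\subseteq\ku z$. Finally I run an induction up the coradical filtration of $K$: given $K_{n-1}\subseteq\ku[z]$ and $u\in K_{n}$ with $\varepsilon(u)=0$, the reduced coproduct $\bar\Delta(u)$ lies in $\ku[z]^{+}\otimes\ku[z]^{+}$, and coassociativity of $\bar\Delta$ forces $\bar\Delta(u)=\bar\Delta(p)$ for a polynomial $p$ in $z$; then $u-p$ is primitive, hence in $K\cap P(H)=P(K)\subseteq\ku z$, so $u\in\ku[z]$. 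Thus $K\subseteq\ku[z]$, giving $K=\ku$ or $K=\ku[z]$.

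To conclude: $H$ is infinite\-dimensional, so it is not a finite module over $\ku$; and it is not a finite module over $\ku[z]$ either, since it contains $\ku[h,z]$, which is a free $\ku[z]$\-module of infinite rank, while over the Noetherian ring $\ku[z]$ every submodule of a finitely generated module is finitely generated. Hence $H$ is not a finite module over any normal commutative Hopf subalgebra. I expect the main obstacle to be the last step of the classification. The conceptual content is light --- the adjoint action of the skew\-primitive $x$ on $g$ and on $\mathfrak{g}_{\bar 0}$ already shows that nothing larger than $\ku[z]$ can be simultaneously normal and commutative --- but upgrading ``$g\notin K$ and $P(K)\subseteq\ku z$'' to ``$K\subseteq\ku[z]$'' means ruling out non\-primitively\-generated commutative Hopf subalgebras of ``Heisenberg type'', which is exactly what the coradical\-filtration induction above (a cobar\-cohomology vanishing for $\ku[z]$) is doing; alternatively one can invoke the structure theory of unipotent affine group schemes, with a little care that $K$ is affine.
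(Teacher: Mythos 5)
Your overall route is sound and genuinely different from what the paper does for this statement: the paper proves only the semiprimeness assertion itself (Lemma \ref{lema:GL-gap}, via Fisher--Montgomery for the skew group ring $U(\mathfrak{gl}(1,1))\ast\langle t\rangle$, exactly the argument you give) and cites \cite{Gelaki-Letzter} for the rest, whereas you give a self-contained classification of the normal commutative Hopf subalgebras. Your preliminary steps are essentially fine: PI via finiteness over the commutative subalgebra $U(\mathfrak{g}_0)$ is the same general fact the paper quotes from McConnell--Robson; for Noetherianity note that $S(\mathfrak{g}_{0})\otimes\Lambda(\mathfrak{g}_{1})$ is supercommutative, not commutative, but it is a finite module over its central subalgebra $S(\mathfrak{g}_{0})$, so the conclusion stands; primeness of $U(\mathfrak{gl}(1,1))$ (which the paper gets from Bell's criterion, Theorem \ref{primethm}) also follows from your localization at the central regular element $z$. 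The exclusion of the grouplike ($\operatorname{ad}_x(g)=2xg$), the computation $P(K)\subseteq\ku z$, and the final non-finiteness of $H$ over $\ku[z]$ are all correct.

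There is, however, a genuine gap at the crucial induction step. From $\bar\Delta(u)\in\ku[z]^{+}\otimes\ku[z]^{+}$ and the (correct, char~$0$) cobar-cohomology vanishing you obtain $p\in\ku[z]^{+}$ with $u-p$ primitive in $H$; but you then place $u-p$ in $K\cap P(H)=P(K)$, which presupposes $p\in K$. At that stage you only know $K_{n-1}\subseteq\ku[z]$, not $\ku[z]\subseteq K$, so $u-p$ need not lie in $K$, and all that follows as written is $u\in\ku[z]+\ku h$; the desired conclusion $u\in\ku[z]$ is not yet proved. The repair is immediate with tools you already have: write $u=f(z)+\alpha h$ with $f\in\ku[z]$ and apply normality to $u$ itself. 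Since $z$ is central and even, $\operatorname{ad}_x$ annihilates $\ku[z]$, while $\operatorname{ad}_x(h)=xh-ghgx=[x,h]=-x$; hence $\operatorname{ad}_x(u)=-\alpha x\in K\cap\ku x=0$, so $\alpha=0$ and $u\in\ku[z]$, completing the induction. (Also, the final inference ``$K\subseteq\ku[z]$, hence $K=\ku$ or $\ku[z]$'' is unnecessary: $K\subseteq\ku[z]$ already suffices, because a finite generating set of $H$ as a $K$-module would generate $H$ as a $\ku[z]$-module, contradicting the fact that the $\ku[z]$-submodule $\ku[h,z]$ is not finitely generated.)
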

Notice that the main result in \cite{Gelaki-Letzter} claims that $H(\mathfrak{gl}(1,1)$ is prime rather than semiprime,
but this is not the case as we show in Lemma \ref{lema:GL-gap}.
Our main result answers the question \ref{item:question-center} negatively for  the
same Hopf algebra.

\begin{theorem} \label{thm:main}
The bosonization $H(\mathfrak{gl}(1,1))$ is not a finite module over its center.  
\end{theorem}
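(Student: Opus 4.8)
The plan is to compute the centre $\mathcal{Z}=\mathcal{Z}\bigl(H(\mathfrak{gl}(1,1))\bigr)$ precisely enough to see that it is \emph{not} a Noetherian $\ku$-algebra, hence not affine; since $H(\mathfrak{gl}(1,1))$ is affine, the Artin--Tate lemma then forbids it from being a finite module over $\mathcal{Z}$. Write $z=E_{11}+E_{22}$, $h=E_{11}-E_{22}$, $x=E_{12}$, $y=E_{21}$ and let $g$ be the group-like of order $2$, so that $H(\mathfrak{gl}(1,1))$ is generated by $z,h,x,y,g$ with $z$ central, $g^{2}=1$, $[h,x]=2x$, $[h,y]=-2y$, $x^{2}=y^{2}=0$, $xy+yx=z$, $gxg^{-1}=-x$, $gyg^{-1}=-y$, $g$ commuting with $z,h$, and with PBW basis $\{z^{a}h^{b}x^{\epsilon_{1}}y^{\epsilon_{2}}g^{\epsilon_{3}}\}$. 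Using the $\mathbb{Z}/2$-grading by the power of $g$, write $H(\mathfrak{gl}(1,1))=U\oplus Ug$ with $U=U(\mathfrak{gl}(1,1))$, so that $\mathcal{Z}=\mathcal{Z}_{0}\oplus\mathcal{Z}_{1}$ with $\mathcal{Z}_{0}=\mathcal{Z}\cap U$ and $\mathcal{Z}_{1}=\mathcal{Z}\cap Ug$.

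Besides $z$ and $c:=zh-2xy$, the key central elements are, for $n\ge 1$,
\[
v_{n}:=zh^{n}+\bigl((h-2)^{n}-h^{n}\bigr)xy\ \in\ U,\qquad v_{0}:=z,\quad v_{1}=c .
\]
Using $xyx=zx$, $yxy=zy$, $x^{2}=y^{2}=0$ and that $h$ commutes with $xy$, one checks directly that $v_{n}\in\mathcal{Z}_{0}$, that $c\,v_{n}=z\,v_{n+1}$ (hence $z^{n-1}v_{n}=c^{n}$), and that $v_{n}v_{m}=z\,v_{n+m}$. Thus $z$ divides $c^{n}$ in $\mathcal{Z}$ for every $n$, although the quotient $c/z$ itself does not lie in $H(\mathfrak{gl}(1,1))$; this is the source of the non-finite-generation.

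The technical heart is to prove that $\mathcal{Z}_{0}$ equals \emph{exactly} $\ku[z]\oplus\bigoplus_{n\ge 1}\ku[z]\,v_{n}$. Grade $H(\mathfrak{gl}(1,1))$ by $\operatorname{ad}(h)$-weight ($\deg x=2$, $\deg y=-2$, and $z,h,g$ of weight $0$): a central element must have weight $0$, and the weight-$0$ subspace is spanned by $\{z^{a}h^{b}g^{\epsilon}\}\cup\{z^{a}h^{b}\,xy\,g^{\epsilon}\}$, every element of which automatically commutes with $h$, $z$ and $g$. Imposing commutation with $x$ and with $y$ on a general weight-$0$ element of $U$ and comparing PBW coefficients reduces to a system of linear relations among the coefficient polynomials in $h$; solving them shows that no central element of $U$ appears beyond the asserted $\ku[z]$-combinations of the $v_{n}$. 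Via $z\mapsto z$ and $v_{n}\mapsto zs^{n}$ (for a new indeterminate $s$), together with $v_{n}v_{m}=zv_{n+m}$, this identifies $\mathcal{Z}_{0}$ with the subring $\{f\in\ku[z,s]:f(0,s)\in\ku\}=\ku[z,zs,zs^{2},\dots]$ of $\ku[z,s]$. I expect this step — in particular verifying that there are no further central elements among the weight-$0$ ones, i.e.\ that the description is exact rather than merely an inclusion — to be the main obstacle, although it is elementary bookkeeping.

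Finally, the ideal $(z,v_{1},v_{2},\dots)$ of $\mathcal{Z}_{0}$ is not finitely generated: for every $N$ one has $v_{N+1}\notin(z,v_{1},\dots,v_{N})\mathcal{Z}_{0}$, since in any identity $v_{N+1}=\sum_{i=0}^{N}f_{i}v_{i}$ with $f_{i}\in\mathcal{Z}_{0}$ the $z$-degree-$1$ homogeneous component of the right-hand side has $s$-degree at most $N$ (each $f_{i}(0,s)$ being a scalar), whereas $v_{N+1}=zs^{N+1}$ has $z$-degree $1$ and $s$-degree $N+1$. The same then holds for the ideal $(z,v_{1},v_{2},\dots)$ of $\mathcal{Z}$: were it finitely generated it would equal $(z,v_{1},\dots,v_{N})\mathcal{Z}$ for some $N$, and taking the $g$-degree-$0$ part of a relation $v_{N+1}=w_{0}z+\sum_{i\le N}w_{i}v_{i}$ (with $w_{j}\in\mathcal{Z}$) would force $v_{N+1}\in(z,v_{1},\dots,v_{N})\mathcal{Z}_{0}$, a contradiction. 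Hence $\mathcal{Z}$ is not Noetherian, in particular not an affine $\ku$-algebra, so by the Artin--Tate lemma (applied to $\ku\subseteq\mathcal{Z}\subseteq H(\mathfrak{gl}(1,1))$, with $H(\mathfrak{gl}(1,1))$ affine) the algebra $H(\mathfrak{gl}(1,1))$ cannot be a finite $\mathcal{Z}$-module.
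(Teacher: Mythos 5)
Your proposal is correct, but it reaches the conclusion by a genuinely different route from the paper. The paper never computes $\mathcal{Z}(H(\mathfrak{gl}(1,1)))$ from scratch at this stage: it shows $H(\mathfrak{gl}(1,1))$ is semiprime (via Fisher--Montgomery, Lemma \ref{lema:GL-gap}), Noetherian and PI (Corollary \ref{PIcor}), proves that $\mathcal{Z}(H(\mathfrak{g}))=\mathcal{Z}(\mathfrak{g})\oplus A(\mathfrak{g})t$ using Gorelik's anticenter, imports the non-Noetherianity of $\mathcal{Z}(U(\mathfrak{gl}(1,1)))$ from the super Harish--Chandra theorem (Musson, Sergeev), descends non-Noetherianity to $\mathcal{Z}(H)$ via Lemma \ref{lema:R-noeth}, and finally invokes the McConnell--Robson criterion (Lemma \ref{lema:Noetherian center}: a semiprime Noetherian PI ring is finite over its center iff the center is Noetherian). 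You instead compute $\mathcal{Z}(H)\cap U$ by hand, exhibit an explicitly non-finitely-generated ideal $(z,v_1,v_2,\dots)$, use the $C_2$-grading to kill the $Ug$-components, and close with the noncommutative Artin--Tate lemma applied to $\ku\subseteq\mathcal{Z}\subseteq H$; this needs neither semiprimeness, nor the PI property, nor any structure theory of centers of Lie superalgebras. The step you flag as the main obstacle does go through as you sketch it: a weight-zero element $p(z,h)+q(z,h)xy$ of $U$ commutes with $x$ (equivalently with $y$) iff $p(z,h)-p(z,h-2)=-z\,q(z,h)$, and reducing mod $z$ forces the $z$-free part of $p$ to be constant (characteristic $0$), which yields exactly $\mathcal{Z}(H)\cap U=\ku[z]\oplus\bigoplus_{n\ge1}\ku[z]v_n\cong\ku+z\ku[z,s]$; this agrees with the description of $\mathcal{Z}(U(\mathfrak{gl}(1,1)))$ recorded in the paper's Remark and parallels the computation in Theorem \ref{thm3}(ii). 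What each approach buys: the paper's argument is modular and generalizes (Proposition \ref{cor:center-H(g)-Noetherian} covers many superalgebras at once, and Lemma \ref{lema:Noetherian center} gives an if-and-only-if criterion), whereas yours is elementary and self-contained for this one algebra, and as a byproduct produces the explicit generators $v_n$ of the center rather than citing the literature.
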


It is open whether such an example still exists if `semiprime Noetherian' is strengthened to
`prime Noetherian' or even `Noetherian domain`.

\medbreak
Clearly \ref{item:question-hopf-center} implies \ref{item:question-center}, 
so we wonder whether the converse is true. 
This is indeed the case for enveloping algebras in positive characteristic by a classical
Theorem of Jacobson. Also, this was asked in \cite[Question 3.6]{Brown-Zhang-jncg} 
in the setting of Hopf algebras in positive characteristic that are iterated Ore extensions. 

\medbreak
The paper is organized as follows: In Section \ref{sect:ring-th} we collect 
basic facts on bosonizations of enveloping algebras of Lie superalgebras.
Section \ref{section:centers} contains the proof of Theorem 
\ref{thm:main}. Finiteness of semiprime Noetherian PI Hopf algebras over their centers
is discussed in Section \ref{positive}.
Our reference for Hopf algebras is \cite{radford-book}; recent surveys on infinite-dimensional Hopf algebras are \cites{andrus,Brown-Zhang-survey}.

\subsection{Notation and definitions}\label{one} 
Let $\ku $ be an algebraically closed field of characteristic 0. 
All vector spaces, algebras, Lie algebras and so on are over $\ku$. 

Let $\mathfrak{g} = \mathfrak{g}_0 \oplus \mathfrak{g}_1$ 
be a  finite dimensional Lie superalgebra with even part
 $\mathfrak{g}_0$, odd part $\mathfrak{g}_1$ and  superbracket
 $[\, , \,]$. 
Denote the enveloping algebra
 of $\mathfrak{g}$ by $U(\mathfrak{g})$ and  write $H(\mathfrak{g})$
 for the bosonization of $U(\mathfrak{g})$ with the group algebra of the cyclic
 group $C_2$. 
 Thus $H(\mathfrak{g})$ is a smash product $U(\mathfrak{g}) \# \ku C_2$ 
 where the generator $t$ of $C_2$ acts on $\mathfrak{g}_0$ 
 with eigenvalue 1 and on $\mathfrak{g}_1$ with eigenvalue $-1$;
  $H(\mathfrak{g})$ is a pointed Hopf algebra where the elements of $\mathfrak{g}_0$ 
 are primitive, those  of $\mathfrak{g}_1$ are $(t,1)$-skew primitive 
 and $t$ is grouplike. Below $\vert x \vert$  denotes the degree of a homogeneous 
 element $x$ of a $C_2$-graded vector space.

\medbreak

\subsection{Key example}\label{subsec:key} 

Let $\mathfrak{g} = \mathfrak{gl}(1,1)$, that is, $\mathfrak{g} = M_2(\ku)$, decomposed so that
$ \mathfrak{g}_0  = \ku x\oplus \ku y$, $\mathfrak{g}_1  =  \ku u \oplus \ku v$,
with the superbracket given by $\mathfrak{g}_0$  being abelian and
\begin{align*}
[u,v] &= x, & [u,u] = [v,v] &=0, & [x,u] &= [x,v]  = 0, & [y,u] &= u, & [y,v] & =  -v.
\end{align*}

Hence $U(\mathfrak{gl}(1,1))$ is presented by generators
$x,y,u,v$ with defining relations
\begin{align} \label{eq:def-rels-gl1,1}
	\begin{aligned}
		&  x\text{ is central},& yu-uy &= u, & yv-vy &= -v,
		\\
		&u^2 = 0, &v^2 &=0,   &  uv + vu &= x.
	\end{aligned}
\end{align}

Then  $ H(\mathfrak{gl}(1,1))  =  U(\mathfrak{gl}(1,1)) \# \ku  C_2$
is presented by generators $x,y,u,v,t$ with  relations \eqref{eq:def-rels-gl1,1} and
\begin{align*}
t^2  &= 1, &  tx &= xt, & ty &= yt, &  tu &= -ut, &  tv &= -vt.
\end{align*}

\medbreak
In general,  given $m, n \in \mathbb N$, 
$\mathfrak{gl}(m,n) \coloneqq M_{m+n}(\ku)$, 
with elements viewed as block matrices
$X = \begin{pmatrix}A & B \\ C& D\end{pmatrix}$, 
where $A \in M_m(\ku)$ and $D \in M_n(\ku)$. 
Here 
$\mathfrak{g}_0  = \left\{\begin{pmatrix}A & 0 \\ 0& D\end{pmatrix}\right\}$, 
while  
$\mathfrak{g}_1  = \left\{\begin{pmatrix}0 & B \\ C& 0\end{pmatrix}\right\}$.
Thus in the above presentation of $\mathfrak{gl}(1,1)$ the generators are
$x = \operatorname{id} = e_{11} + e_{22}$, $y = e_{11}$, $u = e_{12}$ and $v = e_{21}$. 

\medbreak
For $X \in \mathfrak{gl}(m,n)(\ku)$ as above, its \emph{supertrace} is $\mathrm{str}(X) \coloneqq  \mathrm{tr}(A) - \mathrm{tr}(D)$.

\section{Ring-theoretical properties}\label{sect:ring-th} 
We review some properties satisfied by $H(\mathfrak{gl}(1,1))$, as consequences of more general statements. In this Section,
$\mathfrak{g}$ denotes a finite dimensional Lie superalgebra.

\subsection{Noetherianity and Gelfand-Kirillov dimension}\label{max} 
By definition, $U(\mathfrak{g})$ is affine (i.e. finitely generated as algebra). 
Now it follows from the PBW theorem for enveloping algebras of Lie superalgebras
\cite[6.1.1]{Musson} that $U(\mathfrak{g})$ is Noetherian; in particular, by
\cite[6.1.3]{Musson} $U(\mathfrak{g})$ is a finitely generated module over its
subalgebra $U(\mathfrak{g}_0)$, which is Noetherian by the classical PBW theorem.

\medbreak
Similarly,  $H(\mathfrak{g})$ is a finite right (and left) module over $U(\mathfrak{g})$, 
and so it is an affine Noetherian pointed Hopf algebra.

\medbreak
By \cite[Prop. 5.5]{Krause-Lenagan}, if $A$ is a subalgebra of an algebra $B$ 
and $B$ is a finitely generated right or left $A$-module, then 
$\operatorname{GKdim}A = \operatorname{GKdim}B$. By \cite[Ex. 6.9]{Krause-Lenagan},
\begin{align*}
\operatorname{GKdim} H(\mathfrak{g})  = 
\operatorname{GKdim} U(\mathfrak{g}) =  
\operatorname{GKdim} U(\mathfrak{g}_0)  =  
\operatorname{dim}\mathfrak{g}_0. 
\end{align*}
\medbreak

\subsection{Primeness}\label{prime} 
We start with a general fact.
If $y_1, \dots, y_m$  is a basis of $\mathfrak{g}_1$, set 
\begin{align*}
D(\mathfrak{g}) \coloneqq \det ([y_i, y_j])_{1 \le i,j \le m}\in S(\mathfrak{g}_0),
\end{align*}
which does not depend on the basis up to a nonzero scalar multiple. 

\begin{theorem}\label{primethm1}  \cite{Bell}, \cite[15.4.1]{Musson}.
If $D(\mathfrak{g}) \neq 0$, then $U(\mathfrak{g})$ is prime.
\end{theorem}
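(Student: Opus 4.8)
The plan is to identify a filtration of $U(\mathfrak g)$ whose associated graded algebra is a Clifford algebra over the commutative polynomial ring $S(\mathfrak g_0)$, and to deduce primeness of $U(\mathfrak g)$ from primeness of that Clifford algebra, which in turn follows from the nondegeneracy encoded by $D(\mathfrak g)\neq 0$. Concretely, fix bases $x_1,\dots,x_n$ of $\mathfrak g_0$ and $y_1,\dots,y_m$ of $\mathfrak g_1$ and filter $U(\mathfrak g)$ by assigning each $x_a$ degree $2$ and each $y_i$ degree $1$. Then the relations $[x_a,x_b]\in\mathfrak g_0$ and $[x_a,y_i]\in\mathfrak g_1$ are of strictly lower degree than their leading terms $x_ax_b-x_bx_a$ and $x_ay_i-y_ix_a$, whereas the relations $y_iy_j+y_jy_i=[y_i,y_j]$ are homogeneous of degree $2$. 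Using the PBW theorem for Lie superalgebras \cite[6.1.1]{Musson} to compare Hilbert series, one checks that $\operatorname{gr}U(\mathfrak g)$ is the Clifford algebra $\operatorname{Cliff}_{S(\mathfrak g_0)}(B)$ of the quadratic form $B$ on the free module $\bigoplus_i S(\mathfrak g_0)y_i$ with Gram matrix $\bigl([y_i,y_j]\bigr)$ and with $S(\mathfrak g_0)$ central; in particular it is a free $S(\mathfrak g_0)$-module of rank $2^m$. A routine symbol argument then reduces the problem: if $aU(\mathfrak g)b=0$ with $a,b\neq 0$, one picks a homogeneous element of $\operatorname{gr}U(\mathfrak g)$ witnessing primeness between the leading symbols of $a$ and $b$, lifts it to $U(\mathfrak g)$, and thereby produces a nonzero element of $aU(\mathfrak g)b$. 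So it suffices to prove that $\operatorname{Cliff}_{S(\mathfrak g_0)}(B)$ is prime.

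Since that Clifford algebra is $S(\mathfrak g_0)$-free, every nonzero element of $S(\mathfrak g_0)$ is regular in it, so I would localize at the central multiplicative set $S(\mathfrak g_0)\setminus\{0\}$; as primeness passes up and down such a localization, the question becomes whether $C\coloneqq\operatorname{Cliff}_K(B_K)$ is prime, where $K\coloneqq\operatorname{Fract}(S(\mathfrak g_0))$. By hypothesis $\det\bigl([y_i,y_j]\bigr)=D(\mathfrak g)\neq 0$, so $B_K$ is nondegenerate. If $m=\dim\mathfrak g_1$ is even, $C$ is central simple over $K$, hence simple, hence prime. If $m$ is odd, $C$ is an Azumaya algebra over its center $Z$, a quadratic $K$-algebra that is a field precisely when the discriminant of $B_K$ — equivalently $D(\mathfrak g)$, up to a square in $K^\times$ — fails to be a square in $K$. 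Here one uses that $D(\mathfrak g)$, being the determinant of an $m\times m$ matrix with entries in $\mathfrak g_0\subseteq S(\mathfrak g_0)$, is homogeneous of odd degree $m$; since $S(\mathfrak g_0)$ is a UFD and $\ku$ is algebraically closed, such an element is not a square in $K$. Hence $Z$ is a field, $C$ is simple and therefore prime, so $U(\mathfrak g)$ is prime.

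I expect the only genuinely delicate point to be the case $\dim\mathfrak g_1$ odd: a priori the Clifford algebra of a nondegenerate form of odd rank over a field can split as a product of two matrix algebras and thus fail to be prime, and it is precisely the parity of $\deg D(\mathfrak g)$ that excludes this. The other ingredients — the Hilbert series computation identifying $\operatorname{gr}U(\mathfrak g)$ with a Clifford algebra, the descent of primeness along the filtration and along the central localization, and the classical structure theory of Clifford algebras of nondegenerate quadratic forms over a field — are standard.
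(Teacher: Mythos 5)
Your proposal is correct and follows essentially the same route as the paper's sketch (and Bell's original argument): the Clifford filtration with $\mathfrak{g}_1$ in degree $1$ and $\mathfrak{g}_0$ in degree $2$, identification of the associated graded ring as a Clifford algebra over the central subalgebra $S(\mathfrak{g}_0)$, primeness of that graded ring when $D(\mathfrak{g})\neq 0$, and the standard descent to $U(\mathfrak{g})$. You merely flesh out the step the paper leaves to the references, namely primeness of the graded Clifford algebra via central localization and the structure theory over $K=\operatorname{Fract}(S(\mathfrak{g}_0))$, with the correct observation that in the odd-rank case the discriminant $D(\mathfrak{g})$, being homogeneous of odd degree, cannot be a square in $K$.
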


\noindent\emph{Sketch of the proof.} 
Consider the  Clifford filtration of $U(\mathfrak{g})$ 
where the elements of $\mathfrak{g}_1$ have degree 1, 
and those of $\mathfrak{g}_0$ have degree 2. 
In the associated graded algebra $R$, 
$U(\mathfrak{g}_0)$ is replaced by the symmetric algebra $S(\mathfrak{g}_0)$, which is central in $R$. Then $R$ is prime provided that
$ D(\mathfrak{g})  \neq  0$. 
By a standard argument,   so is $U(\mathfrak{g})$. 

\medbreak
It is apparently not known whether primeness always fails when $D(\mathfrak g) = 0$.

\medbreak
Recall that $\mathfrak{g}$ is  \emph{classical simple} if it is simple and $\mathfrak{g}_0$ acts semisimply on $\mathfrak{g}_1$.  Below, $\mathfrak{sl}(m,n)$ denotes the subalgebra of $\mathfrak{gl}(m,n)$ with supertrace 0; for the definition of $\mathfrak{d}(n)$, $(n \geq 2)$, see \cite[$\S$3.5]{Bell}.

\begin{theorem}\label{primethm} \cite[Theorem 3.6]{Bell}, \cite[15.4.2]{Musson}.  
Suppose that $\mathfrak{g}$ is a direct sum of Lie superalgebras each of which either
\begin{enumerate}[leftmargin=*,label=\rm{(\roman*)}]
\item has zero odd part, or 

\item  is isomorphic to some $\mathfrak{gl}(m, n)$ or $\mathfrak{sl}(m, n)$ or $\mathfrak{d}(n)$, or

\item is a classical simple Lie superalgebra which is not of type $\mathfrak{p}(n)$.
\end{enumerate} 
Then $D(\mathfrak{g}) \neq 0$ and so $U(\mathfrak{g})$ is prime. 
\end{theorem}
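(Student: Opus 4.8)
The plan is to prove that $D(\mathfrak g) \neq 0$ and then invoke Theorem \ref{primethm1}. First I would record that $D$ is multiplicative over direct sums: if $\mathfrak g = \bigoplus_r \mathfrak g^{(r)}$ as Lie superalgebras then $[\mathfrak g^{(r)}_1, \mathfrak g^{(s)}_1] = 0$ for $r \neq s$, so relative to a basis of $\mathfrak g_1$ refining the decomposition the matrix $([y_i,y_j])$ is block diagonal and $D(\mathfrak g) = \prod_r D(\mathfrak g^{(r)})$ inside the domain $S(\mathfrak g_0) = \bigotimes_r S(\mathfrak g^{(r)}_0)$. Hence it suffices to handle a single summand, and a summand of type (i) gives $D = \det(\text{empty matrix}) = 1 \neq 0$. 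Next I would reformulate the remaining task: viewing $S(\mathfrak g_0)$ as the ring of polynomial functions on $\mathfrak g_0^{\ast}$, the value of $D(\mathfrak g)$ at $\lambda \in \mathfrak g_0^{\ast}$ is, by multilinearity of the determinant, exactly $\det$ of the Gram matrix of the symmetric bilinear form $b_\lambda$ on $\mathfrak g_1$ defined by $b_\lambda(x,y) \coloneqq \lambda([x,y])$; since $\ku$ is infinite, $D(\mathfrak g) \neq 0$ if and only if $b_\lambda$ is nondegenerate for some $\lambda$.

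I would dispatch the cases $\mathfrak{gl}(m,n)$ and $\mathfrak{sl}(m,n)$ directly. Writing an odd element as $X = \left(\begin{smallmatrix} 0 & B \\ C & 0 \end{smallmatrix}\right)$, one has $[X,X'] = XX' + X'X = \left(\begin{smallmatrix} BC' + B'C & 0 \\ 0 & CB' + C'B \end{smallmatrix}\right)$; choosing $\lambda$ to be the trace of the upper-left $m\times m$ block yields $b_\lambda(X,X') = \mathrm{tr}(BC') + \mathrm{tr}(CB')$, which is nondegenerate because $(B,C') \mapsto \mathrm{tr}(BC')$ is a perfect pairing of $M_{m\times n}(\ku)$ with $M_{n\times m}(\ku)$. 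The same $\lambda$, restricted, works for $\mathfrak{sl}(m,n)$ since $[\mathfrak g_1, \mathfrak g_1] \subseteq \mathfrak{sl}(m,n)_0$; for $\mathfrak d(n)$ I would use the explicit model of \cite[\S 3.5]{Bell} together with an analogous choice of $\lambda$.

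For a general classical simple summand $\mathfrak g$ not of type $\mathfrak p(n)$, the idea is to input a nondegenerate invariant even supersymmetric form $B$ on $\mathfrak g$, available for the basic series by Kac's classification. Fix a Cartan subalgebra $\mathfrak h \subseteq \mathfrak g_0$ (which is also a Cartan subalgebra of $\mathfrak g$) and put $\lambda = B(-, h)$ with $h \in \mathfrak h$. Invariance gives $b_\lambda(x,y) = \pm B(x, [h, y])$, so the Gram matrix of $b_\lambda$ is, up to a global sign, the product of the Gram matrix of $B|_{\mathfrak g_1 \times \mathfrak g_1}$ with the matrix of $\operatorname{ad}(h)|_{\mathfrak g_1}$; the former is invertible because $B(\mathfrak g_0, \mathfrak g_1) = 0$ forces $B|_{\mathfrak g_1}$ to be nondegenerate, and the latter is diagonalizable with eigenvalues the values at $h$ of the weights of $\mathfrak g_1$, i.e.\ of the odd roots, which are nonzero, so for generic $h$ it is invertible; hence $b_\lambda$ is nondegenerate. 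The strange series $\mathfrak q(n)$ escapes this argument and I would treat it by hand: there $\mathfrak g_0$ and $\mathfrak g_1$ are copies of $n\times n$ matrices with $[X,X'] = XX' + X'X$, and $\lambda = \mathrm{tr}$ gives $b_\lambda(X,X') = 2\,\mathrm{tr}(XX')$, the nondegenerate trace form.

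The step I expect to be the main obstacle is this last one: organizing the case analysis along the classification of classical simple Lie superalgebras, confirming in each family that the required invariant form exists and restricts nondegenerately to $\mathfrak g_1$ (with the strange series needing separate care), and — for completeness — checking that the construction genuinely fails for $\mathfrak p(n)$. There the consistent $\mathbb Z$-grading $\mathfrak g = \mathfrak g_{-1} \oplus \mathfrak g_0 \oplus \mathfrak g_1$ has $\dim \mathfrak g_{-1} \neq \dim \mathfrak g_1$, so $[\mathfrak g_{\pm 1}, \mathfrak g_{\pm 1}] = 0$ forces the Gram matrix of every $b_\lambda$ into block-antidiagonal form with rectangular blocks, whence $D(\mathfrak p(n)) \equiv 0$; this is what accounts for its exclusion from the statement.
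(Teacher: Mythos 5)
The paper offers no proof of this theorem --- it is quoted from \cite[Theorem 3.6]{Bell} and \cite[15.4.2]{Musson} --- so what you have done is reconstruct Bell's argument: reduce $D(\mathfrak{g})\neq 0$ to the existence of $\lambda\in\mathfrak{g}_0^{\ast}$ with the form $b_\lambda(x,y)=\lambda([x,y])$ nondegenerate on $\mathfrak{g}_1$, split over direct summands, and then argue case by case, using an invariant even form plus a generic element of the Cartan for the basic classical simple algebras. That skeleton is sound, and your treatments of summands with zero odd part, of $\mathfrak{gl}(m,n)$ and $\mathfrak{sl}(m,n)$, of the basic simple case, and your closing remark explaining why $\mathfrak{p}(n)$ genuinely fails (rectangular off-diagonal blocks, so $D\equiv 0$) are all correct; the $\mathfrak{d}(n)$ case is merely deferred to Bell's model rather than proved, which is acceptable in a sketch but is not yet an argument.

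The one step that would fail as written is the strange series. The classical simple algebra of type $Q$ covered by item (iii) is not the algebra you describe: it is $\mathfrak{psq}(n)$ (for $n\geq 3$), whose odd part consists of \emph{traceless} $n\times n$ matrices and whose even part is $\mathfrak{gl}(n)/\ku\,\mathrm{id}$. Your choice $\lambda=\mathrm{tr}$ is not a linear functional on that even part, since the trace does not vanish on the identity, so $b_\lambda(X,X')=2\,\mathrm{tr}(XX')$ is not available; you have instead verified the (true, but not needed) statement for the non-simple algebra $\mathfrak{q}(n)$. The gap is repairable: take $\lambda=\mathrm{tr}(\,\cdot\,Y)$ for a generic traceless diagonal $Y$, which does descend to the quotient; then $b_\lambda(B,B')=\sum_{i,j}B_{ij}B'_{ji}(y_i+y_j)$ on traceless matrices, and one checks nondegeneracy provided $y_i+y_j\neq 0$ for $i\neq j$ and $\sum_i 1/y_i\neq 0$, conditions achievable exactly when $n\geq 3$ (consistent with $\mathfrak{psq}(2)$ not being classical simple). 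With that correction, and an actual computation for $\mathfrak{d}(n)$, your outline becomes a complete proof along the same lines as the cited sources.
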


The enveloping algebras $U(\mathfrak{p}(n))$ 
are \emph{not} prime, but do have a unique minimal prime ideal, see \cite{Kirkman-Kuzmanovich}. 
If $\mathfrak{g}$ is a classical simple Lie superalgebra, then $U(\mathfrak{g})$ is a domain 
iff $\mathfrak{g} \simeq \mathfrak{osp}(1, 2n)$ \cite[p. 349]{Musson}.

\medbreak
However the  passage  from $U(\mathfrak{g})$ to $H(\mathfrak{g})$ is not guaranteed.
We observe that it is claimed in \cite[3.8]{Gelaki-Letzter}
that $H(\mathfrak{gl}(1,1))$ is prime, invoking \cite[2.3iii]{Goodearl-Letzter}.
But this last result cannot be applied if the action of the grouplike is inner. In actual fact we have:

\begin{lemma}\label{lema:GL-gap} 
$H(\mathfrak{gl}(1,1))$ is semiprime but not prime.
\end{lemma}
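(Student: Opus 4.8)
The plan is to produce, by a direct computation, a conveniently placed central element of $H:=H(\mathfrak{gl}(1,1))$ and then to use it for both halves of the statement. Put
\[
\xi := (2uv-x)\,t \in H .
\]
First I would record the auxiliary identity $(uv)^2 = x\,uv$, which follows at once from $u^2=0$ and $vu=x-uv$. Using this together with the relations \eqref{eq:def-rels-gl1,1} and $t^2=1$, $tx=xt$, $ty=yt$, $tu=-ut$, $tv=-vt$, a short check shows that $\xi$ is \emph{central} in $H$ — the anticommutation of $t$ with $u,v$ cancels the signs, and $\sigma(2uv-x)=2uv-x$ (with $\sigma$ the parity automorphism) lets $t$ slide across $2uv-x$ — and that $\xi^2=(2uv-x)^2 = 4(uv)^2-4x(uv)+x^2 = x^2$.

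Granting this, non-primeness is immediate: $\xi$ and $x$ are central and $(\xi-x)(\xi+x)=\xi^2-x^2=0$, so the two-sided ideals $H(\xi-x)$ and $H(\xi+x)$ satisfy $H(\xi-x)\cdot H(\xi+x)=0$; each is nonzero because in the PBW decomposition $H=U(\mathfrak{gl}(1,1))\oplus U(\mathfrak{gl}(1,1))t$ the element $\xi\pm x$ has both components nonzero (namely $\pm x$ and $(2uv-x)t$). Hence $H$ is not prime.

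For semiprimeness I would localize at the central element $x$. By the PBW theorem $U(\mathfrak{gl}(1,1))$ is $\ku[x,y]$-free on $\{1,u,v,uv\}$ and $H=U(\mathfrak{gl}(1,1))\oplus U(\mathfrak{gl}(1,1))t$, so multiplication by $x$ is injective and $H$ embeds in $\widetilde H:=H[x^{-1}]$. In $R:=U(\mathfrak{gl}(1,1))[x^{-1}]$ the element $2uv-x$ is a unit (its square is $x^2$), so $z:=x^{-1}\xi\in\widetilde H$ is central, $z^2=1$, and $\widetilde H=R\oplus Rt=R\oplus Rz$; since $z\notin R$ and $\operatorname{char}\ku=0$, the elements $e_\pm=\tfrac12(1\pm z)$ are nonzero orthogonal central idempotents with $e_++e_-=1$ and $\widetilde He_\pm\cong R$, so $\widetilde H\cong R\times R$. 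Now $D(\mathfrak{gl}(1,1))=\det\!\begin{pmatrix}0 & x\\ x & 0\end{pmatrix}=-x^2\neq 0$, so $U(\mathfrak{gl}(1,1))$ is prime by Theorem \ref{primethm1}, hence so is its localization $R$, hence $\widetilde H\cong R\times R$ is semiprime. This descends to $H$: as $H$ is Noetherian (Section \ref{sect:ring-th}), its prime radical $N$ is nilpotent, so $N\widetilde H=N[x^{-1}]$ is a nilpotent ideal of the semiprime ring $\widetilde H$ and hence $0$; since $H\hookrightarrow\widetilde H$, this forces $N=0$, i.e. $H$ is semiprime.

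I expect the main obstacle to be pinpointing $\xi$ and checking carefully that it is central — five short but sign-sensitive identities — the conceptual content being that inverting $x$ makes the action of the grouplike $t$ inner, which is precisely the subtlety overlooked in \cite{Gelaki-Letzter}. The descent steps are routine, but one genuinely has to route semiprimeness through $\widetilde H$ and the nilpotence of the prime radical, since being a subring of a semiprime ring would not by itself suffice.
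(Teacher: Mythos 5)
Your proposal is correct, and it splits naturally into two halves. For non-primeness you use exactly the paper's argument: your $\xi=(2uv-x)t$ is the paper's central element $wt$ (with $w=-x+2uv$), $\xi^2=x^2$, and the annihilating pair of central elements $\xi\mp x$ generates two nonzero ideals with zero product; your extra remark that both PBW components of $\xi\pm x$ are nonzero is a harmless (and welcome) explicit check. For semiprimeness, however, you take a genuinely different route. The paper simply invokes the Fisher--Montgomery theorem on semiprime skew group rings, viewing $H=U(\mathfrak{gl}(1,1))\ast\langle t\rangle$ with $|t|=2$ and no $2$-torsion in characteristic zero. You instead invert the regular central element $x$, observe that $z=x^{-1}(2uv-x)t$ is a central involution making the $C_2$-action inner, decompose $H[x^{-1}]\cong U(\mathfrak{gl}(1,1))[x^{-1}]\times U(\mathfrak{gl}(1,1))[x^{-1}]$ via the idempotents $\tfrac12(1\pm z)$, use primeness of $U(\mathfrak{gl}(1,1))$ (Theorem \ref{primethm1}, $D=-x^2\neq0$, or Theorem \ref{primethm} directly), and descend to $H$ via nilpotence of the prime radical of a Noetherian ring (Levitzki's theorem; you should cite a reference such as \cite{Mcconell-Robson} for that, and write out the five sign identities you defer, though they are routine and check out). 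What each approach buys: the paper's proof is shorter and does not need primeness of $U(\mathfrak{gl}(1,1))$; yours is more self-contained and structural, exhibits $H[x^{-1}]$ as a product of two prime rings --- which makes transparent both why $H$ has exactly two minimal primes (consistent with the paper's subsequent Remark) and why the Goodearl--Letzter criterion misapplied in \cite{Gelaki-Letzter} fails, since the grouplike action becomes inner after localization.
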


\begin{proof}
First note that $H(\mathfrak{gl}(1,1))$ is semiprime by \cite[Theorem 7]{Fisher-Montgomery}, since it is a skew group algebra $U(\mathfrak{gl}(1,1))\ast \langle t \rangle$, where $|t| = 2$ and $U(\mathfrak{gl}(1,1))$ has no 2-torsion.
 
To see that primeness fails, set $w \coloneqq -x + 2uv$. Then the following hold:
\begin{align}\label{eq:GL-gap1}
&\begin{aligned}
wx &= xw, & wy &= yw, & wu &= -uw, & wv &= -vw,& wt &= tw,
\end{aligned}
\\\label{eq:GL-gap2}
& wt \in \mathcal{Z} \big(H(\mathfrak{gl}(1,1))\big),
\\\label{eq:GL-gap3}
& (x-wt)(x + wt) =  x^2 - w^2 = 0.
\end{align}

We prove  \eqref{eq:GL-gap1}: The first equality is evident since $x$ is central.
Now
\begin{align*}
wy &= (-x + 2uv)y = -xy + 2u(yv +v) = -yx + 2(yu - u)v+ 2uv = yw;
\\
wu &= (-x + 2uv)u = -xu + 2u(x - uv) = ux;
\\
uw &= u(-x + 2uv) = -ux = -wu;
\\
wv &= (-x + 2uv)v = -xv;
\\
vw &= v (-x + 2uv) = -xv + 2(x - uv)v = xv = -wv.
\end{align*}
The  proof of \eqref{eq:GL-gap2} and the first equality in \eqref{eq:GL-gap3}
follow at once. Finally,
\begin{align*}
w^2 &= (-x + 2uv)(-x + 2uv) = x^2 -4xuv  + 4 uvuv 
\\& =  x^2 -4xuv  + 4 (x - vu)uv = x^2.
\end{align*}
Set $H = H(\mathfrak{gl}(1,1))$ and consider the left ideals $I = H(x-wt)$,
$J = H(x + wt)$. By \eqref{eq:GL-gap2}, $(x-wt)$ and $(x + wt)$ are central, hence
$I$ and $J$ are ideals. Since $IJ = 0$ by \eqref{eq:GL-gap3}, $H$ is not prime.
\end{proof} 

\begin{remark}
The ideals $I$ and $J$ in the proof can in fact be shown to be prime, and are the only minimal prime ideals of $H = H(\mathfrak{gl}(1,1))$. Indeed, Since $H$ is a skew group algebra 
$U(\mathfrak{gl}(1,1))\ast \langle t \rangle$, where $|t| = 2$, 
there are at most 2 minimal primes by \cite{Lorenz-Passman}.

\end{remark}

\subsection{Polynomial identity}\label{PI} 
First,  if a ring $S$ is a finite module over a subring $R$ which is PI, then $S$ is also PI, 
see e.g. \cite[Corollary 13.4.9]{Mcconell-Robson} (and vice versa, as a subring of  a PI ring is evidently PI). Since commutative algebras are PI, algebras that are finite modules
over the center are PI.
Second, the enveloping algebra of a Lie algebra $\mathfrak{p}$
satisfies a PI if and only if $\mathfrak{p}$ is abelian \cite{Latyshev} (recall that $\operatorname{char} \ku =0$). 
Indeed we have the following theorem. 

\begin{theorem}\label{PIthm} \cite{Bakhturin} 
The enveloping algebra $U(\mathfrak{g})$  is PI if and only if $\mathfrak{g}_0$ is abelian.
\end{theorem}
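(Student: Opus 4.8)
The plan is to prove both implications. For the ``if'' direction, suppose $\mathfrak{g}_0$ is abelian. Then $U(\mathfrak{g}_0) = S(\mathfrak{g}_0)$ is a commutative, hence PI, subalgebra of $U(\mathfrak{g})$, and by the discussion in \S\,\ref{max} (via the PBW theorem for Lie superalgebras, \cite[6.1.1, 6.1.3]{Musson}) $U(\mathfrak{g})$ is a finitely generated module over $U(\mathfrak{g}_0)$. By \cite[Corollary 13.4.9]{Mcconell-Robson} a ring that is a finite module over a PI subring is itself PI, so $U(\mathfrak{g})$ is PI. (Concretely, one may even write down a multilinear identity: since $\mathfrak{g}_1$ squares into $\mathfrak{g}_0$ under the superbracket and $S(\mathfrak{g}_0)$ is central in the associated graded Clifford algebra, $U(\mathfrak{g})$ satisfies a standard identity whose degree is bounded in terms of $\dim \mathfrak{g}_1$.)

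For the ``only if'' direction, suppose $U(\mathfrak{g})$ is PI; I must show $\mathfrak{g}_0$ is abelian. The key point is that $U(\mathfrak{g}_0)$ is a subalgebra of $U(\mathfrak{g})$ — this requires the PBW theorem for Lie superalgebras, which guarantees that the obvious map $U(\mathfrak{g}_0) \to U(\mathfrak{g})$ is injective. A subring of a PI ring is PI, so $U(\mathfrak{g}_0)$ is PI. But $\mathfrak{g}_0$ is an ordinary (finite dimensional) Lie algebra over a field of characteristic $0$, so by Latyshev's theorem \cite{Latyshev} — already cited in the paragraph preceding the statement — the enveloping algebra $U(\mathfrak{g}_0)$ is PI if and only if $\mathfrak{g}_0$ is abelian. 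Hence $\mathfrak{g}_0$ is abelian, as desired.

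The main obstacle, such as it is, is purely structural rather than computational: one must invoke the super-PBW theorem carefully at two places — once to see that $U(\mathfrak{g})$ is finite over $U(\mathfrak{g}_0)$ (for the ``if'' direction), and once to see that $U(\mathfrak{g}_0)$ embeds in $U(\mathfrak{g})$ (for the ``only if'' direction) — and then quote Latyshev. No new ideas beyond these references are needed; indeed the theorem is essentially a packaging of Latyshev's result through the super-PBW filtration, which is why the excerpt presents it immediately after recalling \cite{Latyshev}. If one prefers to avoid quoting Latyshev as a black box, the reduction ``$U(\mathfrak{g}_0)$ PI $\Rightarrow \mathfrak{g}_0$ abelian'' can be argued directly: a nonabelian $\mathfrak{g}_0$ contains a copy of the two-dimensional nonabelian Lie algebra $\langle a,b \mid [a,b]=b\rangle$, whose enveloping algebra maps onto the first Weyl-algebra-like ring $\ku\langle a,b\rangle/(ab-ba-b)$, and one checks this algebra is not PI (e.g. it has an infinite-dimensional simple module, or one exhibits explicit non-vanishing of every standard identity).
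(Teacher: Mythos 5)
Your main argument is correct, and it differs from the paper in that the paper offers no proof at all here: Theorem \ref{PIthm} is simply quoted from \cite{Bakhturin}, whose result is stated (and proved) for arbitrary, not necessarily finite dimensional, Lie superalgebras and requires genuinely more work in that generality. In the finite dimensional setting of this section your route is a complete and elementary substitute: the super-PBW theorem \cite[6.1.1, 6.1.3]{Musson} makes $U(\mathfrak{g})$ a free $U(\mathfrak{g}_0)$-module of rank $2^{\dim\mathfrak{g}_1}$ and embeds $U(\mathfrak{g}_0)$ in $U(\mathfrak{g})$, so one direction follows from ``finite module over a PI subring $\Rightarrow$ PI'' (\cite[Cor. 13.4.9]{Mcconell-Robson}, exactly as used in \S\,\ref{PI}), and the other from passing to the subring $U(\mathfrak{g}_0)$ and quoting Latyshev \cite{Latyshev}; what this buys is self-containedness at the cost of not recovering Bakhturin's infinite dimensional statement. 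Two of your asides, however, are shaky and should be deleted or repaired. First, polynomial identities do not lift from the associated graded algebra of the Clifford filtration to $U(\mathfrak{g})$ (the standard filtration of $U(\mathfrak{sl}_2)$ has commutative associated graded algebra, yet $U(\mathfrak{sl}_2)$ is not PI); the concrete bound you want is instead obtained from freeness: left multiplication embeds $U(\mathfrak{g})$ into $M_{2^m}\big(U(\mathfrak{g}_0)\big)$ with $m=\dim\mathfrak{g}_1$, which satisfies the standard identity $S_{2^{m+1}}$ by Amitsur--Levitzki when $\mathfrak{g}_0$ is abelian. Second, a nonabelian Lie algebra need not contain a two-dimensional nonabelian subalgebra (the Heisenberg algebra does not), so your sketched bypass of \cite{Latyshev} is incomplete; in the nilpotent case one argues instead that $U$ of a Heisenberg subquotient maps onto the Weyl algebra $A_1$, which is not PI. Neither caveat affects your main proof, which stands as written.
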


\begin{corollary}\label{PIcor} 
$H(\mathfrak{g})$ is PI if and only if $\mathfrak{g}_0$ is abelian.
\end{corollary}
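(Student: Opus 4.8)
The plan is to deduce this directly from Theorem \ref{PIthm} together with the two standard facts about polynomial identities recalled at the beginning of \S\ref{PI}: a subring of a PI ring is PI, and a ring that is a finite module over a PI subring is again PI. First I would prove the ``if'' direction: assume $\mathfrak{g}_0$ is abelian. By Theorem \ref{PIthm}, $U(\mathfrak{g})$ is then PI. As noted in \S\ref{max}, $H(\mathfrak{g}) = U(\mathfrak{g})\#\ku C_2$ is a finite (right and left) module over the subalgebra $U(\mathfrak{g})$ — indeed a free module of rank $2$, with basis $\{1,t\}$. Hence by \cite[Corollary 13.4.9]{Mcconell-Robson} $H(\mathfrak{g})$ is PI.

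For the ``only if'' direction, assume $H(\mathfrak{g})$ is PI. Since $U(\mathfrak{g})$ is a subalgebra of $H(\mathfrak{g})$, it is also PI, and then Theorem \ref{PIthm} forces $\mathfrak{g}_0$ to be abelian. This completes both implications.

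I do not expect any genuine obstacle here: the corollary is a formal consequence of Theorem \ref{PIthm} and the elementary permanence properties of PI rings, once one observes the finite-module relationship between $H(\mathfrak{g})$ and $U(\mathfrak{g})$. The only point worth stating carefully is that $H(\mathfrak{g})$ really is finitely generated as a module over $U(\mathfrak{g})$ — this is immediate from the smash product structure, $H(\mathfrak{g}) = U(\mathfrak{g}) \oplus U(\mathfrak{g})t$ as a left $U(\mathfrak{g})$-module — so that the cited transfer result applies.
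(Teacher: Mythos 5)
Your argument is correct and is exactly the route the paper intends: the opening remarks of \S\,\ref{PI} (finite modules over PI subrings are PI, and subrings of PI rings are PI) combined with the fact from \S\,\ref{max} that $H(\mathfrak{g}) = U(\mathfrak{g}) \oplus U(\mathfrak{g})t$ is a finite module over $U(\mathfrak{g})$, then Theorem \ref{PIthm}. Nothing is missing.
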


This reduces drastically the chances that $H(\mathfrak{g})$ is prime and 
PI simultaneously.

\section{Centers}\label{section:centers}

\subsection{Center of $U(\mathfrak{g})$}\label{centerU} 
We shall use the following result.

\begin{lemma}\label{lema:Noetherian center} \cite[13.6.14]{Mcconell-Robson}
Let $R$ be a semiprime PI ring. The  following are equivalent:
\begin{itemize}
\item the center $\mathcal{Z}(R)$ is Noetherian;

\item $R$ is right Noetherian and a finitely generated $\mathcal{Z}(R)$-module.
\end{itemize}
\end{lemma}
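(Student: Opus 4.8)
Write $Z := \mathcal{Z}(R)$. I would prove the two implications separately, noting first that the backward one — if $R$ is right Noetherian and module-finite over $Z$, then $Z$ is Noetherian — needs neither semiprimeness nor the polynomial identity: it is the Eakin--Nagata theorem in the form valid for a ring that is finitely generated as a module over a central subring. One can argue it directly: if $Z$ failed the ACC on ideals, choose a $Z$-submodule $N$ of $R$ maximal among the non-finitely-generated ones (such $N$ exists, since the union of a chain of non-finitely-generated $Z$-submodules is again non-finitely-generated), observe that $R/N$ is then a Noetherian $Z$-module, and derive a contradiction with the ACC on right ideals of $R$ in the usual way; alternatively one simply cites the noncommutative Eakin--Nagata theorem.

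The content is therefore the forward implication, and here the PI hypothesis is indispensable. Assuming $Z$ Noetherian, it suffices to show that $R$ is a finitely generated $Z$-module: for then $R$, being a finite module over a commutative Noetherian ring, is Noetherian on both sides, which together with module-finiteness gives the second condition in full. The plan: since $R$ is PI it satisfies $s_{2n}$ for some least $n$, so $R$ has PI-degree $n$ and the Formanek--Razmyslov central polynomial $g = g_n$ of $M_n(\ku)$ assumes central, not identically zero, values on $R$. Let $\mathfrak{c} \subseteq Z$ be the ideal they generate; as $Z$ is Noetherian, $\mathfrak{c}$ is finitely generated, say by evaluations $c_1, \dots, c_s$ of $g$. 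Now induct on $n$, splitting $\operatorname{Spec} Z = V(\mathfrak{c}) \cup \bigcup_j D(c_j)$: by the Artin--Procesi theorem each localization $R[c_j^{-1}]$ is a finite product of Azumaya algebras over $Z[c_j^{-1}]$ — here semiprimeness of $R$ is what bounds the ranks that occur and keeps the localizations well-behaved — hence is a finitely generated $Z[c_j^{-1}]$-module with generators drawn from a fixed finite subset of $R$; while $R/\mathfrak{c}R$ kills every value of $g$, hence has PI-degree $< n$, so after a standard passage to its semiprime quotient (handling the prime radical by the usual argument) the inductive hypothesis makes it module-finite over $Z$, and one lifts generators and reabsorbs the nilpotent part. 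A final patching over $\operatorname{Spec} Z$, using that $Z$ is Noetherian, then forces $R$ itself to be a finitely generated $Z$-module.

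I expect the main obstacle to be precisely this forward direction, and within it two points: keeping the centre under control through the quotient $R \to R/\mathfrak{c}R$ and the passage to the reduced ring — one must know that the centre of that quotient stays Noetherian and finite over the image of $Z$, which rests on the finer structure theory (Rowen's theorem that nonzero ideals of a semiprime PI ring meet the centre, and liftability of central elements after multiplication by a value of a central polynomial) — and making the patching over $\operatorname{Spec} Z$ genuinely go through. Once module-finiteness of $R$ over $Z$ is in hand, everything else, including the converse, is routine.
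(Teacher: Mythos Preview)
The paper does not prove this lemma: it is quoted verbatim from \cite[13.6.14]{Mcconell-Robson} and used thereafter as a black box. So there is no ``paper's own proof'' to compare against beyond the citation itself.

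Your sketch is a reasonable outline of how the result is established in the literature. The forward direction --- localising at evaluations of a Formanek--Razmyslov central polynomial to obtain Azumaya pieces via Artin--Procesi, then reducing PI-degree on the complement and inducting --- is indeed the standard route, and you correctly flag the two genuine difficulties (controlling the centre through the quotient by the Formanek centre and the subsequent reduction, and the patching over $\operatorname{Spec} Z$).

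One caution on the backward direction. You assert it needs neither semiprimeness nor the PI hypothesis, invoking Eakin--Nagata. But Formanek's version of Eakin--Nagata requires $R$ to be a Noetherian \emph{$Z$-module}, whereas the hypothesis only gives that $R$ is right Noetherian \emph{as a ring}; $Z$-submodules of $R$ are not right ideals, so the passage from one to the other is not free. Your direct argument (``derive a contradiction with the ACC on right ideals of $R$ in the usual way'') glosses over exactly this point: from a maximal non-finitely-generated $Z$-submodule $N$ you get that $R/N$ is a Noetherian $Z$-module, but you then need to conclude that $N$ is finitely generated over $Z$, and the right-ideal structure of $R$ does not obviously intervene. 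In the semiprime PI setting one has extra leverage (Rowen's theorem, the trace ring, or the explicit structure of the quotient ring) to close this gap; whether the implication truly holds without any such hypothesis is more delicate than your sketch suggests. This is a minor point relative to the forward direction, but worth tightening if you intend to write out a self-contained proof rather than cite \cite{Mcconell-Robson}.
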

In other words, if $R$ is a semiprime PI right Noetherian ring, then it is 
a finitely generated $\mathcal{Z}(R)$-module iff $\mathcal{Z}(R)$ is Noetherian too.

\medbreak
To proceed with the center of $U(\mathfrak{gl}(1,1))$, we  recall the generalization of the Ha\-rish-Chandra theorem on centers of enveloping algebras of  
semisimple Lie algebras to Lie superalgebras, see \cite[Chapter 13]{Musson}.
Given a Lie superalgebra $\mathfrak{g}$, set 
\begin{align*}
\mathcal{Z}(\mathfrak{g}) \coloneqq \mathcal{Z}(U(\mathfrak{g})).
\end{align*}
\medbreak
A classical simple Lie superalgebra $\mathfrak{g}$ is called \emph{basic} if it admits an even nondegenerate $\mathfrak{g}$-invariant bilinear form, which is necessarily supersymmetric \cite[1.2.4]{Musson}. 
Albeit $\mathfrak{gl}(m,n)$ is not simple,  it bears a nondegenerate invariant supersymmetric even bilinear form, using the supertrace \cite[Exercise 2.7.1]{Musson}.

\medbreak
Let $\mathfrak{g}$ be either a  basic classical simple Lie superalgebra or $\mathfrak{gl}(m,n)$ where $m,n \in \mathbb{N}$.
Let $\mathfrak{h}$ be a Cartan subalgebra of $\mathfrak{g}$;
e.g., the space of diagonal matrices when
$\mathfrak{g} = \mathfrak{gl}(m,n)$. 
Let  $S(\mathfrak{h})^W$ be subalgebra of the symmetric algebra 
$S(\mathfrak{h})$ of $W$-invariant functions 
on $\mathfrak{h}^{\ast}$ where $W$ is the Weyl group.

\medbreak
Now let $I(\mathfrak{h})$  be the subalgebra of $S(\mathfrak{h})^W$
consisting of those $\phi$ such that if $\alpha$ is an
isotropic root and $\lambda \in \mathbb{H}_{\alpha}$, where $\mathbb{H}_{\alpha}$  the hyperplane orthogonal to $\alpha$, then
\begin{align*}\phi(\lambda)  &=  \phi(\lambda + \ell\alpha), &
\text{ for all } \ell \in \ku. 
\end{align*}

Here is the super version of the theorem of Harish-Chandra.

\begin{theorem}\label{centerthm} Assume that $\mathfrak{g}$ is either a basic classical simple Lie superalgebra of type different from $A$ or that $\mathfrak{g} = \mathfrak{gl}(m,n)$ for some positive integers $m$ and $n$.
\begin{enumerate}[leftmargin=*,label=\rm{(\roman*)}]
\item \cite{Kac84}
There is an algebra monomorphism $\psi$ from  $\mathcal{Z}(\mathfrak{g}) $ into $S(\mathfrak{h})^W$.

\item \cites{Kac84,Gorelik04,Sergeev}
The image of the map $\psi$ is $I(\mathfrak{h})$.
\end{enumerate}
\end{theorem}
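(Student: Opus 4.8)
The plan is to adapt Harish-Chandra's classical argument to the super setting, following the cited works of Kac, Sergeev and Gorelik. Fix a triangular decomposition $\mathfrak{g} = \mathfrak{n}^{-} \oplus \mathfrak{h} \oplus \mathfrak{n}^{+}$ compatible with the $C_2$-grading, with corresponding Weyl vector $\rho$, and let $p \colon U(\mathfrak{g}) \twoheadrightarrow U(\mathfrak{h}) = S(\mathfrak{h})$ be the projection afforded by the PBW decomposition $U(\mathfrak{g}) = S(\mathfrak{h}) \oplus \big(\mathfrak{n}^{-}U(\mathfrak{g}) + U(\mathfrak{g})\mathfrak{n}^{+}\big)$. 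Its restriction to $\mathcal{Z}(\mathfrak{g})$ is an algebra homomorphism; composing with the automorphism of $S(\mathfrak{h})$ induced by the translation $\lambda \mapsto \lambda - \rho$ produces the candidate map $\psi$. Injectivity then follows from the standard Verma-module argument: a central element $z$ acts on the Verma module $M(\lambda)$ by the scalar $\psi(z)(\lambda + \rho)$, so $\ker \psi$ annihilates every $M(\lambda)$, and since $\bigcap_{\lambda} \operatorname{Ann}_{U(\mathfrak{g})} M(\lambda) = 0$ one concludes that $\psi$ is a monomorphism.

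To finish (i) and prove the inclusion $\psi(\mathcal{Z}(\mathfrak{g})) \subseteq I(\mathfrak{h})$ of (ii) I would use two linkage statements. For each even simple root $\alpha$ one repeats the non-super reasoning: whenever $\langle \lambda + \rho, \alpha^{\vee}\rangle$ is a positive integer there is a nonzero map $M(s_\alpha \cdot \lambda) \to M(\lambda)$, so the two Verma modules share a central character; hence $\psi(z)$ and $s_\alpha \cdot \psi(z)$ agree on a Zariski-dense subset of $\mathfrak{h}^{\ast}$ and therefore coincide, which yields $\psi(z) \in S(\mathfrak{h})^{W}$ after running over all simple reflections. The genuinely super ingredient concerns an isotropic root $\alpha$: here the odd reflection attached to $\alpha$ shows that $M(\lambda)$ and $M(\lambda + \ell\alpha)$ have the same central character as soon as $\langle \lambda + \rho, \alpha \rangle = 0$, and since $(\alpha,\alpha) = 0$ this persists along the whole affine line $\lambda + \ku\alpha$; transporting this through the $\rho$-shift is precisely the condition $\phi(\lambda) = \phi(\lambda + \ell\alpha)$ on $\mathbb{H}_\alpha$ that defines $I(\mathfrak{h})$.

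The reverse inclusion $I(\mathfrak{h}) \subseteq \psi(\mathcal{Z}(\mathfrak{g}))$ — surjectivity of $\psi$ onto $I(\mathfrak{h})$ — is the delicate point, since the super analogue of Chevalley's restriction theorem fails and one cannot merely quote a freeness/polynomiality statement. For $\mathfrak{g} = \mathfrak{gl}(m,n)$ I would exhibit enough central elements explicitly: the super Gelfand invariants (suitably signed traces of powers of the matrix $(E_{ij})$, equivalently the coefficients of a Capelli/Berezinian-type determinant) are central, and a direct computation of their Harish-Chandra images identifies these images with the supersymmetric power sums, which generate the algebra of supersymmetric polynomials — and that algebra is exactly $I(\mathfrak{h})$ for $\mathfrak{gl}(m,n)$. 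For a basic classical simple $\mathfrak{g}$ of type $\neq A$ one argues instead with the PBW filtration: $\operatorname{gr}\mathcal{Z}(\mathfrak{g})$ together with $\operatorname{gr}\psi$ relates the problem to the restriction map $S(\mathfrak{g})^{\mathfrak{g}} \to S(\mathfrak{h})$, whose image is a known ring of super-invariants, and a graded-dimension comparison in each degree matches it with $\operatorname{gr} I(\mathfrak{h})$, forcing equality upstairs. I expect this surjectivity — and in particular pinning the image down exactly on the isotropic hyperplanes, which is what separates $I(\mathfrak{h})$ from the full $S(\mathfrak{h})^{W}$ — to be the main obstacle; it is the technical core of the theorems of Kac, Sergeev and Gorelik invoked in the statement.
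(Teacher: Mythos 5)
The paper does not actually prove Theorem \ref{centerthm}: it is quoted from the literature, with the proof attributed to \cite{Kac84}, \cite{Gorelik04}, \cite{Sergeev} and a detailed exposition indicated in \cite[\S\,13.2]{Musson}. Your outline follows the same route as those sources for part (i) and for the inclusion $\psi(\mathcal{Z}(\mathfrak{g}))\subseteq I(\mathfrak{h})$: the Harish-Chandra projection with $\rho$-shift, injectivity via the faithfulness of the direct sum of Verma modules, $W$-invariance from linkage at even roots, and the isotropic-line condition coming from the singular vector of weight $\lambda-\alpha$ in $M(\lambda)$ when $(\lambda+\rho,\alpha)=0$, iterated along $\lambda+\mathbb{Z}\alpha$ (using $(\alpha,\alpha)=0$) and extended to the whole line by polynomiality. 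Up to this point the sketch is sound and is essentially the argument expounded in \cite[\S\,13.1--13.2]{Musson}.

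The genuine gap is the reverse inclusion $I(\mathfrak{h})\subseteq\psi(\mathcal{Z}(\mathfrak{g}))$, which you flag as the main obstacle but then dispatch with arguments that do not go through as stated. For type different from $A$ you propose a comparison of associated graded objects, relating $\operatorname{gr}\mathcal{Z}(\mathfrak{g})$ to the image of the restriction $S(\mathfrak{g})^{\mathfrak{g}}\to S(\mathfrak{h})$ and then matching dimensions degree by degree against $\operatorname{gr}I(\mathfrak{h})$. This breaks down in two places: the symbol of $\psi(z)$ need not be the restriction of the symbol of $z$ (the restriction of the top symbol can vanish, so one only gets an inclusion of graded images in the wrong direction for surjectivity), and identifying $\operatorname{gr}I(\mathfrak{h})$ with the homogeneous super-invariants is itself delicate because the defining condition of $I(\mathfrak{h})$ is not homogeneous after the $\rho$-shift. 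This is precisely where Kac's 1984 argument stops: his Laplace-operator construction produces central elements whose images exhaust the ``typical'' directions, and pinning the image down exactly along the isotropic hyperplanes is the content of \cite{Gorelik04} and \cite{Sergeev}, which require a finer analysis (Gorelik's perturbation of the Kac construction, respectively Sergeev's invariant theory) rather than a dimension count. Likewise, for $\mathfrak{gl}(m,n)$ the statements that the Harish-Chandra images of the Gelfand invariants are the supersymmetric power sums, and that these power sums generate the full algebra of supersymmetric polynomials, which coincides with $I(\mathfrak{h})$ after the shift, are each substantive theorems that your sketch assumes rather than proves. So the proposal reproduces the easier half of the theorem and correctly locates, but does not supply, the hard half.
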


A detailed exposition of the proof is given in \cite[\S\,13.2]{Musson}. 

\begin{corollary}\label{cor:center-Noetherian}
\cite[Theorem 2.8]{Musson97}, \cite[13.2.11]{Musson}.
For $\mathfrak{g}$ as in Theorem \ref{centerthm}, $\mathcal{Z}(\mathfrak{g}) $ is Noetherian 
if and only if $\mathfrak{g} = \mathfrak{osp}(1,2n)$ for  $n \in \mathbb N$.
\end{corollary}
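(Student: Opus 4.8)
The plan is to pass to the super Harish--Chandra description of the center and then read off the answer from the combinatorics of isotropic roots. By Theorem~\ref{centerthm}, $\mathcal{Z}(\mathfrak{g})$ is isomorphic as an algebra to $I(\mathfrak{h}) \subseteq S(\mathfrak{h})^W$, so the problem is to decide for which $\mathfrak{g}$ the subalgebra $I(\mathfrak{h})$ is Noetherian. The decisive feature is whether $\mathfrak{g}$ has an isotropic root. Among the Lie superalgebras allowed in Theorem~\ref{centerthm}, $\mathfrak{osp}(1,2n)$ is the only one with no isotropic root: its nonzero roots are $\pm\delta_i\pm\delta_j$ and $\pm\delta_i$, all of nonzero length for the invariant form. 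In every other case there is an isotropic root --- for $\mathfrak{gl}(m,n)$, and more generally whenever $\mathfrak{g}_0$ has both an ``$\epsilon$-type'' and a ``$\delta$-type'' simple ideal, the odd weight $\epsilon_1 - \delta_1$ is isotropic, and the exceptional basic classical superalgebras likewise carry isotropic odd roots. I would treat the two cases separately.

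Suppose first $\mathfrak{g} = \mathfrak{osp}(1,2n)$. Then there are no isotropic roots, so the condition cutting $I(\mathfrak{h})$ out of $S(\mathfrak{h})^W$ is vacuous, whence $I(\mathfrak{h}) = S(\mathfrak{h})^W$. Since $W$ acts on $\mathfrak{h}$ as a finite reflection group (the Weyl group of the reductive $\mathfrak{g}_0$), Chevalley's theorem gives that $S(\mathfrak{h})^W$ is a polynomial algebra, in particular Noetherian; hence $\mathcal{Z}(\mathfrak{g})$ is Noetherian.

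Now suppose $\mathfrak{g} \ne \mathfrak{osp}(1,2n)$ and fix an isotropic root $\alpha$; the goal is to show $I(\mathfrak{h})$ is not Noetherian. The model computation is $\mathfrak{gl}(1,1)$: here $\dim\mathfrak{h} = 2$, $W$ is trivial, and the unique isotropic root $\alpha$ lies on the line $\mathbb{H}_\alpha = \ku\alpha$, so in linear coordinates $z,w$ on $\mathfrak{h}^{\ast}$ with $\mathbb{H}_\alpha = \{z = 0\}$ the defining condition reads $\phi(0,w) = \phi(0,0)$, that is, $I(\mathfrak{h}) = \ku \oplus z\,\ku[z,w]$. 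This ring is not Noetherian: the ideals $(z) \subsetneq (z,zw) \subsetneq (z,zw,zw^2) \subsetneq \cdots$ form a strictly increasing chain, since a direct inspection of the $z$-linear part shows $zw^k \notin (z,zw,\dots,zw^{k-1})$ in $\ku \oplus z\,\ku[z,w]$. For general $\mathfrak{g}$ I would reduce to this picture. Again $S(\mathfrak{h})^W$ is a polynomial ring, and the quotient map $\pi\colon \mathfrak{h}^{\ast} \to \mathfrak{h}^{\ast}/W = \operatorname{Spec}S(\mathfrak{h})^W$ is \'etale near a generic point $\lambda_0$ of $\mathbb{H}_\alpha$: no reflection in $W$ fixes the hyperplane $\mathbb{H}_\alpha$ pointwise (an even root cannot be proportional to the odd root $\alpha$), so the $W$-stabilizer of $\lambda_0$ is trivial and $\lambda_0$ lies off every reflecting hyperplane and off every other isotropic hyperplane. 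Inverting a suitable element of $I(\mathfrak{h})$ that is a unit near $\lambda_0$ but vanishes on the ramification locus and on the remaining isotropic hyperplanes, the $W$-invariance and the other isotropic conditions drop out, and the resulting localization of $I(\mathfrak{h})$ becomes a localization of a ring of the form $\ku \oplus z\,\ku[z,w_1,\dots,w_r]$; the same ascending chain of ideals shows this is not Noetherian, hence neither is $I(\mathfrak{h})$, hence neither is $\mathcal{Z}(\mathfrak{g})$.

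The main obstacle is exactly this last reduction: one must verify carefully that localizing $I(\mathfrak{h})$ at such a multiplicative set genuinely produces the ``$I$-type'' subring of the localized $S(\mathfrak{h})^W$ and not something strictly larger --- i.e. that the constancy condition along the isotropic direction localizes correctly, and that the contributions of the other isotropic roots really are inverted away --- and that an element of $I(\mathfrak{h})$ with the required vanishing behaviour exists (note that any $W$-invariant polynomial vanishing on an isotropic hyperplane automatically lies in $I(\mathfrak{h})$, since $\alpha \in \mathbb{H}_\alpha$). An alternative that sidesteps this, at the cost of a case-by-case argument, is to write down explicit strictly ascending chains of ideals inside $I(\mathfrak{h})$ directly for each family, using the explicit generators of the invariant ring supplied by Sergeev and Gorelik.
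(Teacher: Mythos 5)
The paper itself offers no proof of this statement: it is imported from \cite[Theorem 2.8]{Musson97} and \cite[13.2.11]{Musson}, so there is no internal argument to compare yours against, and your proposal must stand on its own. Where it is complete, it is correct: for $\mathfrak{osp}(1,2n)$ there are no isotropic roots, the condition cutting out $I(\mathfrak{h})$ is vacuous, and $S(\mathfrak{h})^W$ is Noetherian; and your $\mathfrak{gl}(1,1)$ computation, with the chain $(z)\subsetneq(z,zw)\subsetneq(z,zw,zw^2)\subsetneq\cdots$ in $\ku\oplus z\,\ku[z,w]$, is complete (and is all that Theorem \ref{thm:main} actually needs). Reading Noetherianity off the Harish--Chandra image $I(\mathfrak{h})$ of Theorem \ref{centerthm} is also the strategy of the cited source.

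The genuine gap is the reduction for general $\mathfrak{g}$ with an isotropic root, and it is more than the ``careful verification'' you flag. (a) The element you want to invert cannot exist: $W$ permutes the isotropic roots, so the hyperplanes $\mathbb{H}_{w\alpha}$, $w\in W$, are among ``the remaining isotropic hyperplanes''; a $W$-invariant $s$ vanishing on them vanishes on $\mathbb{H}_{\alpha}$ too, hence at $\lambda_0$, contradicting the requirement that $s$ be a unit near $\lambda_0$. Moreover, inverting a $W$-invariant $s$ turns $S(\mathfrak{h})^W$ into $\bigl(S(\mathfrak{h})[1/s]\bigr)^W$; the $W$-invariance does not ``drop out'' under Zariski localization, and \'etale-local triviality of the quotient near a free orbit says nothing directly about the ring $I(\mathfrak{h})[1/s]$. (b) Non-Noetherianity is not preserved by localization (inverting $z$ in $\ku\oplus z\,\ku[z,w]$ gives the Noetherian ring $\ku[z,z^{-1},w]$), so ``the same ascending chain'' is not automatic; one must control the images of the inverted elements along $\{z=0\}$. (c) A smaller slip: the correct local model is $\ku[w_2,\dots,w_r]\oplus z\,\ku[z,w_1,\dots,w_r]$, since the $I$-condition forces constancy only along the $\alpha$-direction inside $\mathbb{H}_\alpha$, not on all of $\mathbb{H}_\alpha$; your chain argument does adapt to that ring. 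The clean repair is your own fallback, and it needs no case-by-case generators and no localization: work inside $I(\mathfrak{h})$ directly. Take a nonzero $W$-invariant $d$ vanishing on all isotropic hyperplanes (e.g.\ the square of $\prod_{\beta\ \mathrm{isotropic}}(\cdot,\beta)$), so that $d\,S(\mathfrak{h})^W\subseteq I(\mathfrak{h})$ by your parenthetical observation, and a $p\in S(\mathfrak{h})^W$ whose restriction to $\mathbb{H}_\alpha$ is nonconstant along $\alpha$ (for each listed $\mathfrak{g}$ a power sum in the $\epsilon$-variables alone works; for $\mathfrak{gl}(m,n)$, $x_1+\dots+x_m$). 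Then $d,dp,dp^2,\dots$ generate a strictly ascending chain of ideals of $I(\mathfrak{h})$: a relation $dp^k=\sum_{i<k}f_i\,dp^i$ with $f_i\in I(\mathfrak{h})$ gives, after cancelling $d$ in the domain $S(\mathfrak{h})$ and restricting to a generic line $\lambda_0+\ku\alpha\subseteq\mathbb{H}_\alpha$, an identity $q^k=\sum_{i<k}c_i q^i$ with $q$ a nonconstant one-variable polynomial and the $c_i$ scalars, which is impossible on degree grounds. This is essentially the argument of the cited references.
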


Combined with Lemma \ref{lema:Noetherian center}, this last result gives:

\begin{corollary}\label{cor:not-fingen-center} $U(\mathfrak{gl}(1,1))$ 
is not finitely generated as a module over its center.
\end{corollary}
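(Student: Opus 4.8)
The plan is to combine Corollary~\ref{cor:center-Noetherian} with Lemma~\ref{lema:Noetherian center}. Recall from Section~\ref{sect:ring-th} that $U(\mathfrak{gl}(1,1))$ is an affine Noetherian algebra, and it is PI by Theorem~\ref{PIthm} (equivalently Corollary~\ref{PIcor}) since $\mathfrak{g}_0 = \ku x \oplus \ku y$ is abelian. Moreover $U(\mathfrak{gl}(1,1))$ is prime: by Theorem~\ref{primethm} applied to $\mathfrak{gl}(1,1)$ we have $D(\mathfrak{gl}(1,1)) \ne 0$, and a prime ring is in particular semiprime. So $R = U(\mathfrak{gl}(1,1))$ satisfies all the hypotheses of Lemma~\ref{lema:Noetherian center}: it is a semiprime (indeed prime) right Noetherian PI ring.

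First I would invoke Corollary~\ref{cor:center-Noetherian}. Since $\mathfrak{gl}(1,1)$ is \emph{not} isomorphic to $\mathfrak{osp}(1,2n)$ for any $n \in \mathbb{N}$ (for instance because $\mathrm{GKdim}\, U(\mathfrak{gl}(1,1)) = \dim \mathfrak{g}_0 = 2$ while $\dim \mathfrak{osp}(1,2n)_0 = n(2n+1)$ which is never $2$, or simply because $\mathfrak{osp}(1,2n)$ is simple whereas $\mathfrak{gl}(1,1)$ is not), that corollary tells us $\mathcal{Z}(\mathfrak{gl}(1,1)) = \mathcal{Z}(U(\mathfrak{gl}(1,1)))$ is \emph{not} Noetherian. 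Then the contrapositive of the ``in other words'' reformulation of Lemma~\ref{lema:Noetherian center} — namely that a semiprime PI right Noetherian ring which is a finitely generated module over its center must have Noetherian center — immediately yields that $U(\mathfrak{gl}(1,1))$ is not a finitely generated module over $\mathcal{Z}(\mathfrak{gl}(1,1))$.

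There is essentially no obstacle here; the only point requiring a word of care is confirming that $\mathfrak{gl}(1,1)$ falls under the scope of Theorem~\ref{centerthm} and Corollary~\ref{cor:center-Noetherian} (it does: those statements explicitly allow $\mathfrak{g} = \mathfrak{gl}(m,n)$ for positive integers $m,n$, so $m = n = 1$ is covered), and that it is genuinely excluded from the $\mathfrak{osp}(1,2n)$ case of Corollary~\ref{cor:center-Noetherian}. The remaining ring-theoretic ingredients (affine, Noetherian, prime, PI) have all been assembled in Section~\ref{sect:ring-th}, so the proof is just a citation of two earlier results chained together.
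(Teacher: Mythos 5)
Your proof is correct and follows the same route as the paper: primeness via Theorem \ref{primethm}, the PI property via Theorem \ref{PIthm}, non-Noetherianity of the center via Corollary \ref{cor:center-Noetherian}, and then Lemma \ref{lema:Noetherian center}. The extra check that $\mathfrak{gl}(1,1) \not\simeq \mathfrak{osp}(1,2n)$ is a harmless (and reasonable) addition that the paper leaves implicit.
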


\pf The Noetherian algebra $U(\mathfrak{gl}(1,1))$ is prime and PI
by Theorems \ref{primethm} and  \ref{PIthm}. Since 
$\mathcal{Z}(\mathfrak{gl}(1,1))$ is not Noetherian by Corollary \ref{cor:center-Noetherian},
Lemma \ref{lema:Noetherian center} applies.
\epf

\begin{remark}
It is well known that 
$\mathcal{Z}(U(\mathfrak{gl}(1,1)))$ is isomorphic to the subalgebra of 
the polynomial algebra $\ku[x,y]$ of polynomials constant on the line $x=y$, 
which is generated by $x^n - y^n$ for all positive integers $n$.
See \cite[0.6.1]{Sergeev} taking into account \cite[Cor. 13.3.8]{Musson}.
\end{remark}

\medbreak
\subsection{Center of $H(\mathfrak{g})$}\label{centerH} 
Let $\mathfrak{g}$ be a  Lie superalgebra, not necessarily finite dimensional. 
Note that $H(\mathfrak{g})$ is $C_2$-graded if we set the degree of $t$ to be 0,
that is $H(\mathfrak{g}) = H(\mathfrak{g})_0 \oplus H(\mathfrak{g})_1$
where 
\begin{align*}
H(\mathfrak{g})_0 &=  U(\mathfrak{g})_0 \# \ku[t] , & 
H(\mathfrak{g})_1  &= U(\mathfrak{g})_1\# \ku[t].
\end{align*}
 Hence the center $\mathcal{Z}(H(\mathfrak{g}))$ of $H(\mathfrak{g})$ is also $C_2$-graded,
\begin{align*}
\mathcal{Z}(H(\mathfrak{g}))  &=  \mathcal{Z}(H(\mathfrak{g}))_0 \oplus \mathcal{Z}(H(\mathfrak{g}))_1, 
\end{align*}
where
\begin{align*}
\mathcal{Z}(H(\mathfrak{g}))_0 &= \mathcal{Z}(H(\mathfrak{g})) \cap
U(\mathfrak{g})_0 \# \ku[t], &
\mathcal{Z}(H(\mathfrak{g}))_1 &= \mathcal{Z}(H(\mathfrak{g})) \cap
U(\mathfrak{g})_1 \# \ku[t].
\end{align*}

To describe the center of $H(\mathfrak{g})$ we need the following definitions. 
Let $M$ be a graded $U(\mathfrak{g})$-bimodule.
Recall that the adjoint action of $\mathfrak{g}$ on  $M$ is given by
\begin{align*}
\operatorname{ad}(u)(m)  &\coloneqq   um - (-1)^{\vert u \vert \vert m\vert}mu,
\end{align*}
for homogeneous elements $u \in \mathfrak{g}$ and $m \in M$. The submodule of $\mathfrak g$-invariants (with respect to the adjoint action) is
\begin{align*}
M^{\epsilon} &\coloneqq \{m \in M: \operatorname{ad}(u)(m) = 0
\text{ for all } u \in \mathfrak g\}.
\end{align*}
Let $M \mapsto \Pi(M)$ be the parity functor, that interchanges even and odd components.
The \emph{twisted adjoint action} $\operatorname{ad}'$ 
of $\mathfrak{g}$ on $M$ is the adjoint action on
$\Pi(M)$ \cites{Arnaudon-et-al,Gorelik}, i.e., 
\begin{align*}
\operatorname{ad}'(u)(m)  \coloneqq   um - (-1)^{\vert u \vert(\vert m\vert +1)}mu,
\end{align*}
for homogeneous $u\in \mathfrak{g}$ and $m \in M$.
Consider now $U(\mathfrak{g})$ as a bimodule over itself.

\begin{definition}\label{ghost} \cite[\S\,2.1]{Gorelik}
The \emph{anticenter} $A(\mathfrak{g}) \coloneqq \Pi(U(\mathfrak{g}))^{\varepsilon}$ is the 
submodule of $\mathfrak g$-invariants of $U(\mathfrak{g})$ with respect to 
the twisted adjoint action $\mathrm{ad'}$,
that is
\begin{align*}
A(\mathfrak{g})  = \{x \in U(\mathfrak{g}): \operatorname{ad}'(u)(x) = 0
\text{ for all } u \in \mathfrak g\}.
\end{align*}
\end{definition}
Clearly, $A(\mathfrak{g})$ is a graded submodule of $U(\mathfrak{g})$
with respect to $\mathrm{ad'}$.

\medbreak
The following facts are recorded in \cite{Gorelik}.
Notice that the meaning of $\mathcal{Z}(\mathfrak{g})$ in \cite{Gorelik}
is different  than here:
it is the super center, i.e., $U(\mathfrak{g})^{\varepsilon}$ in the notation
above, not the center of the associative algebra $U(\mathfrak{g})$ 
as in the present article. See \cite[\S 3.5]{Gorelik}.

\begin{lemma} \label{lema:properties-Ag} 
\begin{enumerate}[leftmargin=*,label=\rm{(\roman*)}]
\item\label{item:properties-A(g)-1} 
$A(\mathfrak{g})_1 = \mathcal{Z}(\mathfrak{g})_1$.

\medbreak
\item\label{item:properties-A(g)-1.5}  $x \in U(\mathfrak{g})_0$ belongs to $A(\mathfrak{g})_0$ if and only if
\begin{align} \label{eq:A(g)0}
x u_0 &= u_0x && \text{and}& x u_1 &= -u_1x, &
\text{ for all }  u_0   \in \mathfrak g_0, &\quad u_1   \in \mathfrak g_1.
\end{align}

\medbreak
\item\label{item:properties-A(g)-2} $A(\mathfrak{g})$ is a $\mathcal{Z}(\mathfrak{g})_0$-submodule of $U(\mathfrak{g})$.

\medbreak
\item\label{item:properties-A(g)-2.5} If $x,y \in A(\mathfrak{g})_0$,  then $xy \in \mathcal{Z}(\mathfrak{g})_0$.

\medbreak
\item\label{item:properties-A(g)-3}  
If $\operatorname{dim} \mathfrak{g}_1$ is even, then $A(\mathfrak{g}) = A(\mathfrak{g})_0$ and $\mathcal{Z}(\mathfrak{g})  =  \mathcal{Z}(\mathfrak{g})_0$. 
\end{enumerate}
\end{lemma}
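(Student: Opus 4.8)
The plan is to verify each of the five assertions by unwinding the definitions of $\operatorname{ad}$ and $\operatorname{ad}'$ on the bimodule $U(\mathfrak g)$, using throughout that $\mathfrak g_1\cdot\mathfrak g_1\subseteq \mathfrak g_0$ so that homogeneous words in $U(\mathfrak g)$ have a well-defined $C_2$-degree. For \ref{item:properties-A(g)-1}, I would compare $\operatorname{ad}'(u)(x)$ with $\operatorname{ad}(u)(x)$ for $x\in U(\mathfrak g)_1$: since $\vert x\vert = 1$, the sign $(-1)^{\vert u\vert(\vert x\vert+1)} = (-1)^{\vert u\vert\cdot 0} = 1$ turns $\operatorname{ad}'(u)(x) = ux - xu$ into the ordinary (non-super) commutator, so $x\in A(\mathfrak g)_1$ iff $x$ commutes with every homogeneous $u\in\mathfrak g$, i.e. $x\in\mathcal Z(U(\mathfrak g))_1 = \mathcal Z(\mathfrak g)_1$. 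For \ref{item:properties-A(g)-1.5}, take $x\in U(\mathfrak g)_0$; then $\vert x\vert+1 = 1$, so $\operatorname{ad}'(u)(x) = ux - (-1)^{\vert u\vert}xu$, which is $xu - ux$ when $u\in\mathfrak g_0$ and $xu + ux$ when $u\in\mathfrak g_1$; setting this to zero for all homogeneous generators gives exactly \eqref{eq:A(g)0}, and conversely these relations on generators propagate to all of $\mathfrak g$ by linearity.

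For \ref{item:properties-A(g)-2} and \ref{item:properties-A(g)-2.5}, I would use that $\operatorname{ad}'(u)$ acts on a product by a \emph{twisted} Leibniz rule: writing $a\cdot_u b$ for the relevant sign bookkeeping, one checks $\operatorname{ad}'(u)(xy) = \operatorname{ad}'(u)(x)\,y + (-1)^{\vert u\vert(\vert x\vert+1)}x\,\operatorname{ad}(u)(y)$, or the symmetric variant — the point being that multiplying an $\operatorname{ad}'$-invariant by an $\operatorname{ad}$-invariant (i.e.\ a central) element of even degree stays $\operatorname{ad}'$-invariant. Since $\mathcal Z(\mathfrak g)_0\subseteq U(\mathfrak g)_0$ is central and even, $z\in\mathcal Z(\mathfrak g)_0$ and $a\in A(\mathfrak g)$ give $za\in A(\mathfrak g)$, proving \ref{item:properties-A(g)-2}. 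For \ref{item:properties-A(g)-2.5}, if $x,y\in A(\mathfrak g)_0$ then by \ref{item:properties-A(g)-1.5} both satisfy \eqref{eq:A(g)0}; multiplying the two relations, $xy$ commutes with $\mathfrak g_0$ and, for $u_1\in\mathfrak g_1$, $xyu_1 = x(-u_1y) = -(xu_1)y = -(-u_1x)y = u_1 xy$, so $xy$ commutes with all of $\mathfrak g$, hence $xy\in\mathcal Z(\mathfrak g)_0$.

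Finally, for \ref{item:properties-A(g)-3}, suppose $\dim\mathfrak g_1$ is even, so $m$ is even. The claim $A(\mathfrak g) = A(\mathfrak g)_0$ amounts to $A(\mathfrak g)_1 = 0$; by \ref{item:properties-A(g)-1} this is the statement $\mathcal Z(\mathfrak g)_1 = 0$, so the two halves of \ref{item:properties-A(g)-3} are equivalent and it suffices to show $\mathcal Z(\mathfrak g)_1 = 0$ when $m$ is even. Here I would argue via the associated graded with respect to the Clifford filtration already used in the sketch of Theorem \ref{primethm1}: in $\operatorname{gr}U(\mathfrak g)$ the even part $S(\mathfrak g_0)$ is central and the odd-degree generators anticommute up to the bracket, so a homogeneous central element of odd total $C_2$-degree would, in top Clifford degree, give a nonzero odd element of the Clifford-type algebra over $S(\mathfrak g_0)$ centralizing everything — and for a \emph{rank-$m$ even} quadratic form such a central element is impossible (the center of an even Clifford algebra is purely even), forcing the leading term, hence the element, to vanish.

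\textbf{Main obstacle.} The genuinely delicate point is \ref{item:properties-A(g)-3}: the parity-of-$m$ dichotomy is a Clifford-algebra phenomenon and needs the filtration argument to be run carefully (the bracket form $([y_i,y_j])$ need not be nondegenerate, so one cannot simply quote the classification of Clifford centers — one must track what the possibly-degenerate even form contributes, localizing at the generic point of $\operatorname{Spec}S(\mathfrak g_0)$ if necessary). Parts \ref{item:properties-A(g)-1}--\ref{item:properties-A(g)-2.5}, by contrast, are essentially sign-chasing once the twisted Leibniz rule for $\operatorname{ad}'$ is written down correctly.
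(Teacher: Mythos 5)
Your verifications of \ref{item:properties-A(g)-1}--\ref{item:properties-A(g)-2.5} are correct and are exactly the sign-chasing the paper has in mind when it calls these items straightforward: the parity computation $(-1)^{|u|(|x|+1)}$ reduces $\operatorname{ad}'$ to the ordinary commutator on odd $x$ (giving \ref{item:properties-A(g)-1}) and to \eqref{eq:A(g)0} on even $x$, your twisted Leibniz identity $\operatorname{ad}'(u)(xy)=\operatorname{ad}'(u)(x)y+(-1)^{|u|(|x|+1)}x\operatorname{ad}(u)(y)$ does hold and gives \ref{item:properties-A(g)-2}, and the computation $xyu_1=u_1xy$ for \ref{item:properties-A(g)-2.5} is right. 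Your observation that, by \ref{item:properties-A(g)-1}, the two assertions in \ref{item:properties-A(g)-3} are equivalent and both reduce to $\mathcal Z(\mathfrak g)_1=0$ is also correct, and it inverts the paper's logic: the paper does not prove $A(\mathfrak g)_1=0$ at all but quotes \cite{Gorelik}, Cor.~3.1.3, for it, and then deduces $\mathcal Z(\mathfrak g)_1=0$ from \ref{item:properties-A(g)-1}.

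The gap is therefore exactly where you flag it: your justification for the vanishing in the associated graded algebra (``the center of an even Clifford algebra is purely even'') is the statement for a \emph{nondegenerate} form, i.e.\ precisely the case $D(\mathfrak g)\neq 0$, whereas \ref{item:properties-A(g)-3} is needed in general (e.g.\ when $[\mathfrak g_1,\mathfrak g_1]$ is small), and the proposal leaves that case as an acknowledged ``obstacle'' rather than proving it. The route can be completed, though. The symbol of a central element is central in $\operatorname{gr}U(\mathfrak g)$, which by PBW is a free $S(\mathfrak g_0)$-module and hence embeds into $\operatorname{Cl}_K(V_K,B_K)$ with $K=\operatorname{Frac}\,S(\mathfrak g_0)$, $V_K=\mathfrak g_1\otimes K$. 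Over the field $K$ split $V_K=N\perp P$ with $N$ the radical of $B_K$, $\dim N=r$, $\dim P=s$, $r+s=m$. Every $n\in N$ supercommutes with all generators, so an \emph{odd} central element $z$ satisfies $nz=zn=-nz$, whence $Nz=0$ and $z=n_1\cdots n_r\,c$ with $c\in\operatorname{Cl}(P)$ of parity $\equiv 1-r$; commuting with $p\in P$ then forces $cp=(-1)^rpc$. If $m$ is even and $r$ is even, $c$ is an odd element of the ungraded center of the nondegenerate, even-rank $\operatorname{Cl}(P)$, which is $K$, so $c=0$; if $r$ is odd, $c$ is an even element anticommuting with all of $P$, and multiplying by the invertible central odd volume element of $\operatorname{Cl}(P)$ ($s$ odd) gives $c=0$. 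So the odd center of the graded algebra vanishes for $m$ even, the leading term of a putative odd central element of $U(\mathfrak g)$ dies, and your argument closes; alternatively one can simply cite Gorelik as the paper does.
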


\pf \ref{item:properties-A(g)-1} to 
\ref{item:properties-A(g)-2.5} are straightforward.
The first equality in \ref{item:properties-A(g)-3} 
is proved in \cite[Cor. 3.1.3]{Gorelik}; this says that 
$0 = A(\mathfrak{g})_1 = \mathcal{Z}(\mathfrak{g})_1$ 
by\ref{item:properties-A(g)-1}, and so $\mathcal{Z}(\mathfrak{g}) = 
\mathcal{Z}(\mathfrak{g})_0$.
\epf

\medbreak
Now we move on to consider the bosonization $H(\mathfrak{g})$ of $U(\mathfrak{g})$.

\begin{lemma} \label{lema:center-Z(H)}
We have $\mathcal{Z}(H(\mathfrak{g}))_1 =0$, and hence 
\begin{align}
\mathcal{Z}(H(\mathfrak{g})) 
=  \mathcal{Z}(\mathfrak{g})_{0} \oplus  A(\mathfrak{g})_0 \, t.
\end{align}
\end{lemma}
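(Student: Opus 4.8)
The plan is to use the $C_2$-grading of $\mathcal{Z}(H(\mathfrak{g})) = \mathcal{Z}(H(\mathfrak{g}))_0 \oplus \mathcal{Z}(H(\mathfrak{g}))_1$ recorded just before the statement, and to identify each graded piece explicitly. Write a general element of $H(\mathfrak{g})$ using the decomposition $H(\mathfrak{g}) = U(\mathfrak{g})\#\ku[t] = U(\mathfrak{g}) \oplus U(\mathfrak{g})\,t$, so an element is $a + bt$ with $a,b \in U(\mathfrak{g})$. First I would compute the commutator conditions: for such an element to be central it must commute with all of $\mathfrak{g}_0$, with all of $\mathfrak{g}_1$, and with $t$. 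Commuting with $t$ forces $a$ and $b$ to each be $C_2$-homogeneous with the right parities, which is precisely the statement that $\mathcal{Z}(H(\mathfrak{g}))$ is $C_2$-graded; so it suffices to treat homogeneous elements, namely those of the form $a$ with $a \in U(\mathfrak{g})_{\bar 0}$ (degree $0$ part) and those of the form $b\,t$ with $b \in U(\mathfrak{g})_{\bar 0}$ (also degree $0$ in the $H$-grading since $|t| = 0$), and likewise odd ones $a\,t$ with $a$ odd, etc.

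Next I would handle $\mathcal{Z}(H(\mathfrak{g}))_1 = 0$. An element of $\mathcal{Z}(H(\mathfrak{g}))_1$ has the form $a + bt$ with $a \in U(\mathfrak{g})_{\bar 1}$ and $b\in U(\mathfrak{g})_{\bar 1}$; commuting with $t$ already kills such a term because $t(a+bt) = (-a+bt)t \cdot(\text{sign adjustments})$... more precisely, $t a t^{-1} = -a$ for $a$ odd, so $t(a+bt)t^{-1} = -a + (tbt^{-1})t = -a - bt \neq a+bt$ unless $a = 0$ and $b = 0$. Actually the clean way: since $\mathcal{Z}(H(\mathfrak{g}))$ is $C_2$-graded with $\deg t = 0$, an odd central element lies in $U(\mathfrak{g})_{\bar 1} \# \ku[t] = U(\mathfrak{g})_{\bar 1} \oplus U(\mathfrak{g})_{\bar 1} t$; write it $a + bt$. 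Commuting with an odd $u \in \mathfrak{g}_1$ and using the relations $ut = -tu$ one derives, comparing the $U(\mathfrak{g})$-component and the $U(\mathfrak{g})t$-component separately, equations that force $a$ to satisfy the $\operatorname{ad}'$-invariance and $b$ likewise; but by Lemma \ref{lema:properties-Ag}\ref{item:properties-A(g)-1}, $A(\mathfrak{g})_1 = \mathcal{Z}(\mathfrak{g})_1$, and an element of $\mathcal{Z}(\mathfrak{g})_1$ that is also odd and central in $H$ is quickly seen to vanish. I expect the cleanest route is: an odd element of $U(\mathfrak g)$ commuting with $\mathfrak g_0$ and anticommuting appropriately, together with the constraint from $t$, collapses to $0$; I would spell out that $t$-conjugation already gives a contradiction unless the element is zero.

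Then for the even part I would show $\mathcal{Z}(H(\mathfrak{g}))_0 = \mathcal{Z}(\mathfrak{g})_0 \oplus A(\mathfrak{g})_0\,t$. An element of $\mathcal{Z}(H(\mathfrak{g}))_0$ is $a + bt$ with $a, b \in U(\mathfrak{g})_{\bar 0}$. Commutation with $\mathfrak{g}_0$: since elements of $\mathfrak g_0$ are even and commute with $t$, this just says $a$ and $b$ both centralize $\mathfrak{g}_0$ in $U(\mathfrak{g})$. Commutation with an odd $u \in \mathfrak{g}_1$: here $ut = -tu$, so writing $u(a+bt) = (a+bt)u$ and pushing $t$ to the right gives $ua + ubt = au + btu = au - but \cdot$... carefully, $(bt)u = b(tu) = b(-ut) = -(bu)t$, so the equation $ua + (ub)t = au + (bt)u = au - (bu)t$ splits into $ua = au$ (i.e. $a$ commutes with $\mathfrak g_1$) and $ub = -bu$ (i.e. $b$ anticommutes with $\mathfrak g_1$). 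By Lemma \ref{lema:properties-Ag}\ref{item:properties-A(g)-1.5}, an even element $a$ commuting with both $\mathfrak g_0$ and $\mathfrak g_1$ lies in $A(\mathfrak g)_0$ — but wait, we need $a$ to commute with $\mathfrak g_1$, not anticommute, so $a \in \mathcal{Z}(\mathfrak{g})_0$; and $b$ commutes with $\mathfrak g_0$ and anticommutes with $\mathfrak g_1$, which is exactly the defining condition \eqref{eq:A(g)0} for $b \in A(\mathfrak{g})_0$. Conversely any $a \in \mathcal{Z}(\mathfrak{g})_0$ and any $b \in A(\mathfrak{g})_0$ produce a central element $a + bt$, checking also centrality with $t$ (automatic since both are even). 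The main obstacle, such as it is, is purely bookkeeping: getting the signs right when conjugating by $t$ and when pushing $t$ past odd elements, and being careful that "even in $U(\mathfrak{g})$" and "degree $0$ in $H(\mathfrak{g})$" interact correctly with $|t| = 0$; there is no deep step, the content is entirely in Lemma \ref{lema:properties-Ag} and the $C_2$-grading of the center already established.
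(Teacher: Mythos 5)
Your proposal is correct and follows essentially the same route as the paper's proof: the odd part of the center dies because an odd element must both commute and anticommute with $t$, and for the even part one writes $a+bt$ with $a,b\in U(\mathfrak{g})_0$ and splits the commutation relations with $\mathfrak{g}_0$ and $\mathfrak{g}_1$ to get $a\in\mathcal{Z}(\mathfrak{g})_0$ and $b\in A(\mathfrak{g})_0$ via \eqref{eq:A(g)0}. The mid-proof hesitations (the detour through $\operatorname{ad}'$-invariance in the odd case, the momentary misplacement of $a$ in $A(\mathfrak{g})_0$) are resolved correctly, so no gap remains.
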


\pf
Let $x \in \mathcal{Z}(H(\mathfrak{g}))_1$. Then $tx = xt$ by centrality
but $tx = -xt$, so $x = 0$. 
Let  $x = a + bt\in H(\mathfrak{g})_0$, 
where $a, b \in U(\mathfrak{g})_0$. Given $u_0 \in \mathfrak g_0$ and 
$u_1 \in \mathfrak g_1$, we have
\begin{align*}
u_0x &= xu_0 &&\Leftrightarrow&  u_0(a + bt) &= (a + bt)u_0 
&&\Leftrightarrow&  u_0a  = au_0 \text{ and } u_0b &= bu_0 ;
\\
u_1x &= xu_1 &&\Leftrightarrow&  u_1(a + bt) &= (a + bt)u_1 
&&\Leftrightarrow&  u_1a  = au_1 \text{ and } u_1b &= -bu_1.
\end{align*}
Since $x$ commutes with $t$, we see that
$x\in \mathcal{Z}(H(\mathfrak{g}))$ iff
$a\in  \mathcal{Z}(\mathfrak{g})_{0}$ and $b \in A(\mathfrak{g})_0$.
\epf

\medbreak

We can now deduce that the centers of the bosonizations of many enveloping algebras of Lie superalgebras are not Noetherian:
\begin{proposition}\label{cor:center-H(g)-Noetherian}
Let $\mathfrak{g}$ be as in Theorem \ref{centerthm}, 
but $\mathfrak{g} \not\simeq \mathfrak{osp}(1,2n)$ for any $n \in \mathbb N$.
Assume that $\dim \mathfrak{g}_1$ is even. 
Then $\mathcal{Z} (H(\mathfrak{g})) $ is not Noetherian.
\end{proposition}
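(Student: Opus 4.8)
The plan is to deduce a contradiction with Corollary \ref{cor:center-Noetherian} by showing that Noetherianity of $\mathcal{Z}(H(\mathfrak{g}))$ would force $\mathcal{Z}(\mathfrak{g})$ to be Noetherian. The link between the two is the right choice of $C_2$-grading. Besides the $C_2$-grading of \S\ref{centerH}, in which $\deg t = 0$, the algebra $H(\mathfrak{g})$ also carries the $C_2$-grading by \emph{total parity}: here $\mathfrak{g}_0$ sits in degree $0$, while $\mathfrak{g}_1$ \emph{and} $t$ sit in degree $1$. One checks directly that the defining relations of $H(\mathfrak{g})$ are homogeneous for this assignment (the super relations of $U(\mathfrak{g})$ are homogeneous by the definition of a Lie superalgebra, and $t^2 = 1$, $tx = (-1)^{|x|}xt$ are homogeneous since $\deg(tx) = \deg(xt) = 1 + |x|$), so it is an algebra grading. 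For it, the degree-$0$ part of $U(\mathfrak{g})$ is exactly the even part $U(\mathfrak{g})_0$; hence $\mathcal{Z}(\mathfrak{g})_0 \subseteq U(\mathfrak{g})_0$ is homogeneous of degree $0$, whereas $A(\mathfrak{g})_0 t \subseteq U(\mathfrak{g})_0\, t$ is homogeneous of degree $1$. Since the center of a $C_2$-graded algebra is a graded subalgebra (comparing homogeneous components of $zs = sz$ shows each homogeneous component of a central element is central), Lemma \ref{lema:center-Z(H)} now says precisely that $\mathcal{Z}(H(\mathfrak{g})) = \mathcal{Z}(\mathfrak{g})_0 \oplus A(\mathfrak{g})_0 t$ is the decomposition of the graded ring $\mathcal{Z}(H(\mathfrak{g}))$ into its degree-$0$ and degree-$1$ components.

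The second ingredient is the elementary observation that the degree-$0$ component $R_0$ of a Noetherian $C_2$-graded commutative ring $R = R_0 \oplus R_1$ is itself Noetherian: for an ideal $I$ of $R_0$ we have $RI = I \oplus R_1 I$ with $R_1 I \subseteq R_1$, so $RI \cap R_0 = I$; thus $I \mapsto RI$ sends a strictly ascending chain of ideals of $R_0$ to a strictly ascending chain of ideals of $R$, which is impossible if $R$ is Noetherian. Applying this with $R = \mathcal{Z}(H(\mathfrak{g}))$: if $\mathcal{Z}(H(\mathfrak{g}))$ were Noetherian then $\mathcal{Z}(\mathfrak{g})_0$ would be Noetherian. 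But $\dim \mathfrak{g}_1$ is even, so $\mathcal{Z}(\mathfrak{g})_0 = \mathcal{Z}(\mathfrak{g})$ by Lemma \ref{lema:properties-Ag}\ref{item:properties-A(g)-3}, and $\mathcal{Z}(\mathfrak{g})$ is not Noetherian by Corollary \ref{cor:center-Noetherian} since $\mathfrak{g} \not\simeq \mathfrak{osp}(1,2n)$. This contradiction finishes the proof.

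The only real idea is identifying the correct grading: the $C_2$-grading of \S\ref{centerH} is useless here because it places all of $\mathcal{Z}(H(\mathfrak{g}))$ in degree $0$, whereas the total-parity grading (with $\deg t = 1$) separates $\mathcal{Z}(\mathfrak{g})$ from the anticenter piece $A(\mathfrak{g})_0 t$. After that, the points to verify are routine: that total parity really is an algebra grading on $H(\mathfrak{g})$, that $A(\mathfrak{g})_0 t$ genuinely lands in odd degree — which is where $A(\mathfrak{g})_0 \subseteq U(\mathfrak{g})_0$ is used — and the two-line lemma on graded Noetherian rings. If one prefers to avoid graded language altogether, the same argument can be phrased as: $I \mapsto I\,\mathcal{Z}(H(\mathfrak{g}))$ embeds the ideal lattice of $\mathcal{Z}(\mathfrak{g})$ order-preservingly and injectively into that of $\mathcal{Z}(H(\mathfrak{g}))$, using Lemma \ref{lema:properties-Ag}\ref{item:properties-A(g)-2} (so that $I\,A(\mathfrak{g})_0 \subseteq A(\mathfrak{g})_0$) together with Lemma \ref{lema:center-Z(H)} to compute $I\,\mathcal{Z}(H(\mathfrak{g})) \cap \mathcal{Z}(\mathfrak{g}) = I$.
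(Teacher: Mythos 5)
Your proof is correct and follows essentially the same route as the paper: the same decomposition $\mathcal{Z}(H(\mathfrak{g})) = \mathcal{Z}(\mathfrak{g}) \oplus A(\mathfrak{g})\,t$ coming from Lemma \ref{lema:center-Z(H)} together with Lemma \ref{lema:properties-Ag} \ref{item:properties-A(g)-3}, then a Noetherian-descent step to pass from $\mathcal{Z}(H(\mathfrak{g}))$ to $\mathcal{Z}(\mathfrak{g})$, and finally the contradiction with Corollary \ref{cor:center-Noetherian}. The only difference is one of packaging: you get the descent by reading the decomposition as the total-parity $C_2$-grading (with $t$ in degree $1$) of the commutative ring $\mathcal{Z}(H(\mathfrak{g}))$ and citing the fact that the degree-zero part of a Noetherian graded commutative ring is Noetherian, whereas the paper checks the hypotheses of its Lemma \ref{lema:R-noeth} directly via Lemma \ref{lema:properties-Ag} \ref{item:properties-A(g)-2} and \ref{item:properties-A(g)-2.5}; these are interchangeable, since your graded lemma is exactly Lemma \ref{lema:R-noeth} in graded form.
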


\pf
By Lemmas \ref{lema:properties-Ag} \ref{item:properties-A(g)-3} and \ref{lema:center-Z(H)}, as we assume 
that $\dim \mathfrak{g}_1$ is even, we have
\begin{align}\label{caught}
\mathcal{Z}(H(\mathfrak{g})) 
=  \mathcal{Z}(\mathfrak{g})_{0} \oplus  A(\mathfrak{g})_0 \, t
=  \mathcal{Z}(\mathfrak{g}) \oplus  A(\mathfrak{g}) \, t,
\end{align}
which is a decomposition of $\mathcal{Z}(\mathfrak{g})$-modules
by Lemma \ref{lema:properties-Ag} \ref{item:properties-A(g)-2}.
Also $A(\mathfrak{g}) \, tA(\mathfrak{g}) \, t \subset \mathcal{Z}(\mathfrak{g})$
by Lemma \ref{lema:properties-Ag}   \ref{item:properties-A(g)-2.5}.
Since $\mathcal{Z}(\mathfrak{g})$ is not Noetherian by Corollary \ref{cor:center-Noetherian}, 
we conclude that $\mathcal{Z}(H(\mathfrak{g}))$ 
is not Noetherian 
by Lemma \ref{lema:R-noeth} below. 
\epf

For completeness, we give a proof of the following result.

\begin{lemma}\label{lema:R-noeth}
Let $S$ be a subring of a Noetherian commutative ring $R$  that admits
a $S$-sub\-module $T$  such that $R = S \oplus T$ and $T \cdot T \subset S$. 
Then $S$ is  Noetherian.
\end{lemma}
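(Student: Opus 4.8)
The plan is to show that any ascending chain of ideals of $S$ stabilizes, by transporting the problem into the Noetherian ring $R$. First I would observe that $R$, being a finitely generated module over... wait — actually $R$ need not be module-finite over $S$ a priori, so I cannot simply invoke Eakin--Nagata. Instead I would work directly. Let $\mathfrak{a}_1 \subseteq \mathfrak{a}_2 \subseteq \cdots$ be an ascending chain of ideals of $S$. Form the extended ideals $\mathfrak{a}_i R$ in $R$; since $R$ is Noetherian, this chain stabilizes, say $\mathfrak{a}_n R = \mathfrak{a}_{n+1} R = \cdots$. The crux is then to recover $\mathfrak{a}_i = \mathfrak{a}_i R \cap S$, i.e. that the extension-contraction is the identity on ideals of $S$; this is exactly where the hypotheses $R = S \oplus T$ (as $S$-modules) and $T\cdot T \subseteq S$ enter.

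To prove $\mathfrak{a}R \cap S = \mathfrak{a}$ for an ideal $\mathfrak{a}$ of $S$: using the direct sum decomposition, write an element $r \in R$ as $r = s + \tau$ with $s \in S$, $\tau \in T$ uniquely. An element of $\mathfrak{a}R$ is a finite sum $\sum a_k r_k$ with $a_k \in \mathfrak{a}$, $r_k \in R$; writing $r_k = s_k + \tau_k$ we get $\sum a_k r_k = \left(\sum a_k s_k\right) + \left(\sum a_k \tau_k\right)$, where the first summand lies in $\mathfrak{a} \subseteq S$ and the second in $\mathfrak{a}T \subseteq T$ (here I use that $\mathfrak{a}$ is an $S$-ideal and $T$ is an $S$-module, so $\mathfrak{a}T \subseteq ST \subseteq T$). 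Hence $\mathfrak{a}R \subseteq \mathfrak{a} \oplus \mathfrak{a}T \subseteq S \oplus T$, and by uniqueness of the decomposition, $\mathfrak{a}R \cap S = \mathfrak{a}$. Wait, I need $\mathfrak{a}T \cap S = 0$, which follows since $\mathfrak{a}T \subseteq T$ and $S \cap T = 0$. Actually this argument did not even use $T \cdot T \subseteq S$ — let me reconsider whether that hypothesis is needed.

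Hmm — the subtlety is that $\mathfrak{a}R$ is the ideal of $R$ generated by $\mathfrak{a}$, and a general element is $\sum a_k r_k$ as above, so the computation is fine and $T\cdot T \subseteq S$ seems unnecessary for this direction. But perhaps it is needed to ensure $S$ is actually a ring in a way compatible... no, $S$ is given as a subring. Possibly the hypothesis $T\cdot T \subseteq S$ is a red herring for this lemma, or is used implicitly to know that $T$ is well-behaved; I will include the computation as above and, if $T\cdot T \subseteq S$ turns out to be needed (e.g. to handle the multiplication $\mathfrak{a}\cdot r$ when expanding products of elements of $T$), note that it guarantees $\mathfrak{a}T \cdot T \subseteq \mathfrak{a}S \subseteq \mathfrak{a}$, keeping everything controlled. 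The main obstacle I anticipate is precisely this bookkeeping: making sure the extension-contraction identity $\mathfrak{a}R \cap S = \mathfrak{a}$ is proved cleanly using only the $S$-module direct sum decomposition, since everything else is then immediate.

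Once that identity is in hand, the proof concludes quickly: given the ascending chain $\mathfrak{a}_1 \subseteq \mathfrak{a}_2 \subseteq \cdots$ in $S$, the chain $\mathfrak{a}_1 R \subseteq \mathfrak{a}_2 R \subseteq \cdots$ in the Noetherian ring $R$ stabilizes at some $n$, whence $\mathfrak{a}_i = \mathfrak{a}_i R \cap S = \mathfrak{a}_n R \cap S = \mathfrak{a}_n$ for all $i \ge n$. Therefore $S$ satisfies the ascending chain condition on ideals, i.e. $S$ is Noetherian. I would write this up in a few lines, stating the extension-contraction lemma as the key step and then applying it.
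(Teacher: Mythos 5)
Your proof is correct, and at bottom it is the paper's own argument: since $R = S \oplus T$ and $\mathfrak{a}S \subseteq \mathfrak{a}$, the extended ideal is $\mathfrak{a}R = \mathfrak{a}S + \mathfrak{a}T = \mathfrak{a} \oplus \mathfrak{a}T$, which is exactly the ideal $I = J \oplus JT$ that the paper attaches to an ideal $J$ of $S$; both proofs then recover $J$ by intersecting with $S$ and transport the ascending chain condition down from $R$. The one genuine difference is the point you agonized over, and your instinct is right: the contraction identity $\mathfrak{a}R \cap S = \mathfrak{a}$ uses only that $T$ is an $S$-module complement of $S$ in $R$ (equivalently, that there is an $S$-linear retraction $R \to S$), so the hypothesis $T \cdot T \subseteq S$ is not needed for the lemma --- this is the standard fact that an $S$-module direct summand of a commutative Noetherian ring is Noetherian. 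The paper invokes $T \cdot T \subseteq S$ only to verify by hand that $J + JT$ is closed under multiplication by $R$ (to absorb the term $TJT \subseteq J(T\cdot T) \subseteq JS \subseteq J$), whereas in your formulation that verification is automatic because $\mathfrak{a}R$ is an ideal by construction. So you can state the key step simply as the extension--contraction identity and drop the hedging; keeping the extra hypothesis in the statement does no harm, since the application in Proposition \ref{cor:center-H(g)-Noetherian} supplies it anyway.
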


\pf  If $J$ is an ideal in $S$, then $I = J \oplus JT$ is an ideal of $R$:
\begin{align*}
R I = (S+T)(J + JT ) = SJ + SJT + JT + TJT \subset J + JT.
\end{align*}
Clearly $I \cap S = J$. 
Given an ascending chain $\mathfrak C$
of ideals in $S$, we get in this way an ascending chain 
of ideals in $R$ that stabilizes because $R$ is Noetherian. Intersecting 
with $S$, we see that $\mathfrak{C}$ stabilizes, so $S$ is Noetherian.
\epf

We can now prove  Theorem \ref{thm:main}, namely that  
$H(\mathfrak{gl}(1,1))$ is not finitely generated as a module over its center.

\medbreak
 \textsc{Proof of Theorem \ref{thm:main}.}
The Noetherian algebra $H(\mathfrak{gl}(1,1))$ is semiprime by Lemma 
\ref{lema:GL-gap} and PI by  Corollary \ref{PIcor}. Since the center 
$\mathcal{Z}(H(\mathfrak{gl}(1,1)))$ is not Noetherian by Proposition \ref{cor:center-H(g)-Noetherian},
Lemma \ref{lema:Noetherian center} applies. 
\qed

\medbreak
In fact, an explicit description of $\mathcal{Z}(H(\mathfrak{gl}(1,1)))$
follows from Proposition \ref{cor:center-H(g)-Noetherian}:

\begin{theorem}\label{thm3} \begin{enumerate}
\item[{\rm(i)}] $\mathcal{Z}(H(\mathfrak{gl}(1,1))) \, = \mathcal{Z}(\mathfrak{gl}(1,1)) \oplus A(\mathfrak{gl}(1,1))  \, t$. 
\item[{\rm(ii)}] Let $\tau$ be the algebra automorphism of $\ku [x,y]$ defined by $\tau(x) = x$ and $\tau (y) = y-1.$ Then
$$ A(\mathfrak{gl}(1,1)) \; = \; \{ x\omega - (\omega + \tau (\omega))uv : \omega \in \ku [x,y]\}.$$
\end{enumerate} 
\end{theorem}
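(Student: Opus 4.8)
The plan is to compute both centers explicitly using the Harish-Chandra-type description already set up in the excerpt, together with Lemma \ref{lema:center-Z(H)} and Lemma \ref{lema:properties-Ag}. Part (i) is immediate: since $\dim \mathfrak{gl}(1,1)_1 = 2$ is even, Lemma \ref{lema:properties-Ag}\ref{item:properties-A(g)-3} gives $\mathcal{Z}(\mathfrak{gl}(1,1)) = \mathcal{Z}(\mathfrak{gl}(1,1))_0$, and then Lemma \ref{lema:center-Z(H)} yields $\mathcal{Z}(H(\mathfrak{gl}(1,1))) = \mathcal{Z}(\mathfrak{gl}(1,1))_0 \oplus A(\mathfrak{gl}(1,1))_0\, t = \mathcal{Z}(\mathfrak{gl}(1,1)) \oplus A(\mathfrak{gl}(1,1))\, t$. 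So everything reduces to identifying $A(\mathfrak{gl}(1,1))$ as a subset of $U(\mathfrak{gl}(1,1))$.

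For part (ii), the approach is to use the PBW basis of $U(\mathfrak{gl}(1,1))$ over $U(\mathfrak{gl}(1,1)_0) = \ku[x,y]$, namely $\{1, u, v, uv\}$ (using $u^2 = v^2 = 0$ and $uv + vu = x$). Write a general element $a \in U(\mathfrak{gl}(1,1))_0$ as $a = p + q\,uv$ with $p, q \in \ku[x,y]$ (the even PBW monomials are $1$ and $uv$). By Lemma \ref{lema:properties-Ag}\ref{item:properties-A(g)-1.5}, $a \in A(\mathfrak{gl}(1,1))_0$ iff $a$ commutes with $x, y$ and anticommutes with $u, v$. Commuting with $x$ is automatic; commuting with $y$ forces a relation on $p, q$ coming from the relations $yu - uy = u$, $yv - vy = -v$ (so $y$ commutes with $uv$ but $x\omega$-type terms must be handled carefully — in fact one computes $y(uv) = (uv)y$, $y\cdot 1 = 1\cdot y$, so commuting with $y$ imposes no constraint, or a mild one). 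The real constraint comes from anticommuting with $u$ and with $v$: using $xu = ux$, $u(uv) = u(x - vu) = xu - uvu = xu$ wait $u^2=0$ so $u\cdot uv = 0\cdot v$... one carefully computes $u\cdot a$ and $a\cdot u$ in the PBW basis, sets $ua = -au$, and similarly for $v$. These two conditions should collapse to: $p = x\omega$ and $q = -(\omega + \tau(\omega))$ for some $\omega \in \ku[x,y]$, where the shift $\tau(y) = y - 1$ enters precisely because moving $u$ past a polynomial in $y$ produces the substitution $y \mapsto y-1$ (since $uy = (y-1)u$) while moving $v$ past it produces $y \mapsto y+1$; reconciling the two forces the symmetric combination $\omega + \tau(\omega)$ and pins down $p$ divisible by $x$.

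The key steps in order: (1) record the PBW decomposition $a = p + q\,uv$; (2) compute $ua$, $au$, $va$, $av$ in this basis using the defining relations \eqref{eq:def-rels-gl1,1}, tracking the $y$-shifts carefully ($uy = (y-1)u$, $vy = (y+1)v$); (3) impose $ua = -au$ and $va = -av$ to get a linear system relating $p, q$, solve it to obtain the stated parametrization by $\omega$; (4) verify $y$-commutativity is automatically satisfied by such elements (it should be, since $uv$ and $1$ both commute with $y$); (5) conclude, and as a sanity check note that $w = -x + 2uv$ from Lemma \ref{lema:GL-gap} corresponds to $\omega = -1$ (then $\omega + \tau(\omega) = -2$, giving $x\omega - (\omega+\tau(\omega))uv = -x + 2uv$), consistent with $wt \in \mathcal{Z}(H(\mathfrak{gl}(1,1)))$.

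The main obstacle is step (2)–(3): the bookkeeping of how $u$ and $v$ interact with arbitrary $\ku[x,y]$-coefficients and with the monomial $uv$, since $u\cdot uv$ and $uv \cdot u$ both vanish or reduce via $uv + vu = x$, and one must get the $y$-shift direction right for $u$ versus $v$ so that the two anticommutation conditions are compatible and genuinely force the factor of $x$ in $p$. I would also double-check that the map $\omega \mapsto x\omega - (\omega + \tau(\omega))uv$ is well-defined and injective on $\ku[x,y]$ (it is: if $x\omega = 0$ in $\ku[x,y]$ then $\omega = 0$), so that the description is a genuine parametrization rather than merely a spanning set.
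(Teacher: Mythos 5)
Your proposal follows essentially the same route as the paper: part (i) via Lemma \ref{lema:center-Z(H)} together with Lemma \ref{lema:properties-Ag}\ref{item:properties-A(g)-3}, and part (ii) by writing an even element as $r(x,y)+s(x,y)uv$ in the PBW basis, reducing membership in $A(\mathfrak{gl}(1,1))$ to anticommutation with $u$ and $v$ via Lemma \ref{lema:properties-Ag}\ref{item:properties-A(g)-1.5}, and using the shift relations to see both conditions collapse to $r+\tau(r)=-xs$, whence $r=x\omega$ and $s=-(\omega+\tau(\omega))$. The only detail you leave implicit---that $r+\tau(r)\in x\ku[x,y]$ forces $x\mid r$, which the paper gets by writing $r=\sum_j r_jx^j$ and comparing top $y$-degree terms in $r_0+\tau(r_0)=0$---is a one-line check in characteristic $0$, so your plan is sound.
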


\pf (i) Since $\operatorname{dim} \mathfrak{gl}(1,1)$ is even, this follows from \eqref{caught}.

\smallbreak
(ii) Let $\alpha \in A(\mathfrak{gl}(1,1)) = A(\mathfrak{gl}(1,1))_0 \subseteq U(\mathfrak{gl}(1,1))_0 = \ku [x,y] \oplus \ku [x,y]uv$, so that 
$\alpha = r(x,y) + s(x,y)uv$ for unique elements $r = r(x,y),\, 
s = s(x,y) \in \ku [x,y]$. 
Since $U(\mathfrak{gl}(1,1))_0$ is commutative we see from \eqref{eq:A(g)0} that 
\begin{equation}\label{condition} \alpha \in A(\mathfrak{gl}(1,1)) \Longleftrightarrow \alpha u = -u\alpha \textit{ and } \alpha v = - v \alpha.
\end{equation}
By \eqref{eq:def-rels-gl1,1}, we have that $yu= u(y + 1) $, $yv = v(y - 1)$, hence 
\begin{align*}
f(x,y) u &= uf(x, y+1), & f(x,y) v &= vf(x, y-1), & \text{for any  } f(x,y) &\in \ku [x,y].
\end{align*}
Therefore, since $uvu = ux$ and $vuv = vx$, we see that
\begin{align*}
\alpha u &=  \left(r(x,y) + s(x,y)uv\right)u = u \left(r(x,y+1) + xs(x,y+1)\right);
\\
u\alpha  &=  u\left(r(x,y) + s(x,y)uv\right) = ur(x,y);
\\
 \alpha v &=  \left(r(x,y) + s(x,y)uv\right)v = v r(x,y - 1);
 \\
v\alpha  &=  v\left(r(x,y) + xs(x,y)\right).
\end{align*}
Hence  both equalities in the right hand side of \ref{condition} translate to
the same condition $r(x,y) + xs(x,y)  = -r(x,y-1)$, that is
\begin{equation}\label{key} \alpha \in A(\mathfrak{gl}(1,1)) \Longleftrightarrow r + \tau (r) = - xs.
\end{equation}
Writing $r = \sum_{0 \leq j \leq m}r_j x^j$ for unique  $r_j \in \ku [y]$ and substituting in \eqref{key}, we find that 
\begin{equation}\label{last} r_0 + \tau (r_0) \; = \; 0;
\end{equation}
considering the highest degree terms in $y$ in \eqref{last}, we see that $r_0 = 0$. 
Equivalently, $r = x\omega$ for $\omega \in \ku [x,y]$. 
Substituting in 
\eqref{key} we get that $s = - \omega - \tau(\omega)$ for $\omega \in \ku [x,y]$.  
Since all the steps can be reversed, the proof is complete.
\epf

\section{On Noetherian PI Hopf algebras finite over their centers}\label{positive} 
As is known, examples of Noetherian prime affine PI algebras  that are \emph{not} finite over their center are scarce. 
Here is an example: it is shown in \cite[\S\,13.10.2]{Mcconell-Robson}  
that a subalgebra $S$ of the algebra $R$ introduced in \cite[\S\,5.3.7 (iii)]{Mcconell-Robson} is affine  Noetherian prime  PI, but $\mathcal{Z}(S)$ is not Noetherian. 
Thus $S$ is not finite over $\mathcal{Z}(S)$ by Lemma \ref{lema:Noetherian center}.

\medbreak
On the positive side, here is a criterion, obtained by combining a couple of deep results,
that apparently has not been recorded elsewhere.

\begin{theorem}\label{gldimthm} Let $H$ be an affine Noetherian Hopf algebra satisfying a polynomial identity. Suppose that the global homological dimension of $H$ is finite. Then $H$ is a finite direct sum of prime rings, and is a finite module over its center.
\end{theorem}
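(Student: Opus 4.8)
The plan is to combine two classical structure theorems: first, that an affine Noetherian PI algebra of finite global dimension is, up to a finite direct sum decomposition, an Auslander-regular, Cohen-Macaulay, prime ring; and second, Stafford-Zhang-type results asserting that such prime PI rings of finite global dimension are finite over their centers. Concretely, I would first reduce to the prime case: a Noetherian ring of finite global dimension has finitely many minimal primes $P_1,\dots,P_k$, and I want to show $H \cong \prod_i H/P_i$. For this I would invoke that a Noetherian PI ring of finite global dimension is semiprime (idempotents from finite gl. dim. plus no nilpotents), and that a semiprime Noetherian ring with finitely many minimal primes and \emph{no} nontrivial central idempotent obstruction — here using that $H$ is a Hopf algebra, whose minimal primes are permuted transitively only in restricted ways, or more simply that finite global dimension forces the minimal primes to be comaximal — decomposes as the claimed finite product. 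Actually the cleaner route: finite global dimension implies $H$ has no nontrivial nilpotents and the Jacobson radical behaviour gives that $H$ is a finite direct product of prime rings (this is where I expect to lean on a cited ring-theoretic fact, e.g.\ from \cite{Mcconell-Robson} or Stafford's work, about Noetherian rings of finite global dimension being direct sums of prime indecomposable rings).

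Second, having written $H = \prod_{i} H_i$ with each $H_i$ prime, affine, Noetherian, PI, and of finite global dimension (global dimension of a finite product is the max of the factors'), I would apply the theorem that an affine prime Noetherian PI algebra of finite global dimension is a finite module over its center. The relevant input here is the Auslander-Gorenstein/Cohen-Macaulay machinery together with a result to the effect that such an algebra is a maximal order in its quotient division ring (or at least a Cohen-Macaulay order), and then \cite[Propositions 13.9.8, 13.9.11]{Mcconell-Robson} — already cited in the introduction — gives that a prime Noetherian PI maximal order is finite over its center. Alternatively one cites directly the theorem of Stafford-Zhang / Brown-Goodearl that Noetherian affine PI Hopf algebras (or PI algebras) of finite global dimension are finite over their centers; if such a statement is available as a black box the proof collapses to almost nothing, but the point of the theorem is precisely to assemble these pieces, so I would spell out the reduction to the prime case explicitly.

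Finally, I would reassemble: if each $H_i$ is a finite $\mathcal{Z}(H_i)$-module and $\mathcal{Z}(H) = \prod_i \mathcal{Z}(H_i)$, then $H$ is a finite $\mathcal{Z}(H)$-module, completing the proof.

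The main obstacle is the reduction to the prime case — more precisely, justifying that finite global dimension (rather than merely being semiprime Noetherian PI) forces $H$ to split as an honest \emph{finite direct product} of prime rings, rather than just being semiprime with finitely many minimal primes. The subtlety is that semiprime with finitely many minimal primes is weaker than a direct product decomposition (cf.\ the example $H(\mathfrak{gl}(1,1))$ itself, which by Lemma~\ref{lema:GL-gap} is semiprime with exactly two minimal primes but is \emph{not} a direct product and, by Theorem~\ref{thm:main}, not finite over its center); the hypothesis of finite global dimension is exactly what rules this out, since a prime ring is a \emph{direct summand} as soon as the corresponding minimal prime is generated by a central idempotent, and finite global dimension of a Noetherian ring forces the minimal primes to be comaximal (equivalently, the ring is a finite product of its prime "blocks"). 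I would locate and cite the precise statement of this fact — likely from Stafford's work on homological properties of Noetherian rings or from \cite{Mcconell-Robson} — and once that is in hand, the rest is a routine application of the already-cited maximal-order result of \cite{Mcconell-Robson}.
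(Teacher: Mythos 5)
There is a genuine gap, and it sits exactly at the step you yourself flag as the ``main obstacle''. Your reduction to the prime case rests on the claim that for a Noetherian (PI) ring, finite global dimension forces semiprimeness and a decomposition into a finite direct product of prime rings, and you propose to ``locate and cite'' such a fact from \cite{Mcconell-Robson} or Stafford's work. No such general fact exists: the upper triangular $2\times 2$ matrices over $\ku$ are affine, Noetherian, PI, of global dimension $1$, yet not semiprime (the strictly upper triangular matrices form a nonzero nilpotent ideal), so ``finite global dimension implies no nontrivial nilpotents'' is simply false for Noetherian PI rings, and the hoped-for comaximality/block decomposition fails with it. The decisive point your proposal never uses is the Hopf algebra hypothesis. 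The paper's proof needs it essentially: by Wu--Zhang \cite[Theorems 0.1, 0.2]{Wu-Zhang}, an affine Noetherian PI \emph{Hopf} algebra is Auslander-Gorenstein and AS-Gorenstein; finite global dimension then upgrades this to Auslander-regular and AS-regular; and it is Stafford--Zhang \cite[1.4]{Stafford-Zhang}, applied under these homological hypotheses, that yields \emph{both} conclusions at once -- $H$ is a finite direct sum of prime rings \emph{and} is integral over its center -- after which affineness (plus PI) gives module-finiteness over the center. Without the Gorenstein input coming from the antipode/Hopf structure, neither your first step (the prime decomposition) nor your second step is available.

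Your second step has the same defect in milder form: the assertion that a prime affine Noetherian PI algebra of finite global dimension is a maximal order (so that \cite[Propositions 13.9.8, 13.9.11]{Mcconell-Robson} applies) is again something you would need to derive from the Auslander-regular/Cohen--Macaulay machinery, i.e.\ from the same Wu--Zhang plus Stafford--Zhang inputs; and the ``black box'' you mention -- that affine Noetherian PI Hopf algebras of finite global dimension are finite over their centers -- is precisely the statement being proved here, which the paper notes has not been recorded elsewhere, so it cannot be cited. In short: the architecture (decompose into primes, handle each prime factor, reassemble) is not wrong, but the justification of the decomposition must pass through the Hopf-theoretic Gorenstein theorem of Wu--Zhang and the Auslander-regular PI theorem of Stafford--Zhang, which in fact delivers the integrality over the center simultaneously and makes the maximal-order detour unnecessary.
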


\begin{proof} By \cite[Theorems 0.1, 0.2]{Wu-Zhang}, $H$ is Auslander-Goren\-stein 
and AS-Goren\-stein. Thus, if $\operatorname{gldim} H < \infty$, it is Auslander-regular 
and AS-regular. By \cite[1.4]{Stafford-Zhang}, $H$ is a finite direct sum of 
prime rings and is integral over its center. One easily sees that an affine ring $R$
which is integral over $\mathcal{Z}(R)$ is a finite module over  $\mathcal{Z}(R)$.
\end{proof}

\begin{remark}
The results quoted from \cite{Wu-Zhang} are valid over all fields $\ku$, and the 
hypothesis that $H$ is affine can be weakened to ``all simple $H$-modules are finite 
dimensional over $\ku $''. 
\end{remark}

\begin{remark} Note that it follows immediately from Theorems \ref{thm:main} and \ref{gldimthm} that  
\begin{align}\label{eq:gldim}
\operatorname{gldim}  H(\mathfrak{gl}(1,1)) =  \infty.
\end{align}
\noindent But in fact it is easy to see this directly: for consider the Hopf subalgebra $S \coloneqq  \ku \langle  u,t\rangle$ of $H(\mathfrak{gl}(1,1))$, which is isomorphic to the  Sweedler Hopf algebra. Clearly $\operatorname{gldim} S = \infty$ and $H(\mathfrak{gl}(1,1))$ is free over $S$ by the PBW theorem. By a standard argument, see e.g. \cite[Lemma 4.25]{A-Natale-Torrecillas}, \eqref{eq:gldim} follows. Notice that $S$ could be replaced in the above argument by the subalgebra $R \coloneqq  \ku \langle  u \rangle$.
\end{remark}

\begin{remark} Returning to an issue raised in \S 1, namely whether an affine Noetherian Hopf algebra $H$ which is a domain and satisfies a PI has to be finite over $\mathcal{Z}(H)$, we note that this cannot simply be proved by showing that $\operatorname{gldim} H$ is finite and applying Theorem \ref{gldimthm}. For the  affine Hopf domains $B(n, \{p_i\}_{i=1}^s,q, \{\alpha_i\}_{i=1}^s)$ constructed in \cite{Wang-Zhang-Zhuang} are Noetherian, 
finite modules over their centers but have finite global dimension only for some very special values of the parameters. 
\end{remark}

\begin{remark} Taking further a comment made in the opening paragraph of \S 1, 
it would be interesting to look for conditions ensuring that a prime Noetherian Hopf algebra is a maximal order. For example, filtered Noetherian domains whose associated graded rings are maximal orders inherit the same property \cite[Theorem 5.1.6]{Mcconell-Robson}. It may also be more than a coincidence that, in the characterisation \cite[Theorem F]{Brown-maxord} of those group algebras of polycyclic-by-finite groups which are maximal orders, the prime 2 is key, as is the case in the Lie superalgebra setting.

\end{remark}

\subsection*{Acknowledgements} We thank Ken Goodearl, Ed Letzter, Ian Musson and Vera Sergano\-va for providing useful information.
N.A.  thanks Efim Zelmanov and Slava Futorny for their warm hospitality durin his visit 
to the SUSTech.

\end{document}